\documentclass[10pt]{article}

\usepackage{amsmath,amssymb,amscd,amsthm,makeidx,graphicx,url,bm,bbm,mathrsfs,palatino,bm}
\usepackage{a4wide,xy}
\usepackage{enumerate,comment}
\usepackage{titlesec}
\setcounter{secnumdepth}{4}




\titleformat{\paragraph}[runin]
{\normalfont\normalsize\bfseries}{\theparagraph}{1em}{}
\titlespacing*{\paragraph}
{0pt}{3.25ex plus 1ex minus .2ex}{1.5ex plus .2ex}

\usepackage[
breaklinks=true,
pdftitle={Mirror symmetry with branes via equivariant Verlinde formulae},
pdfauthor={Tam\'as Hausel, Du Pei}]{hyperref}
\usepackage{color}
\definecolor{tocolor}{rgb}{.1,.1,.5}
\definecolor{urlcolor}{rgb}{.2,.2,.6}
\definecolor{linkcolor}{rgb}{.1,.4,.6}
\definecolor{citecolor}{rgb}{.6,.3,.1}
\hypersetup{colorlinks=true, urlcolor=urlcolor, linkcolor=linkcolor, citecolor=citecolor}

\setlength{\marginparwidth}{1.2in}
\let\oldmarginpar\marginpar
\renewcommand\marginpar[1]{\-\oldmarginpar[\raggedleft\footnotesize #1]
{\raggedright\footnotesize #1}}

\makeindex

\xyoption{all}
\input{xypic}

\numberwithin{equation}{section}

\newtheorem{theorem}[equation]{Theorem}
\newtheorem{proposition}[equation]{Proposition}
\newtheorem{corollary}[equation]{Corollary}

\newtheorem{lemma}[equation]{Lemma}

\theoremstyle{remark}
\newtheorem{remark}[equation]{Remark}
\newtheorem{example}[equation]{Example}

\theoremstyle{definition}
\newtheorem{definition}[equation]{Definition}

\newcounter{margin}
{\end{itshape}  \bigskip}

\def\beq{\begin{eqnarray}}
\def\eeq{\end{eqnarray}}
\def\bes{\begin{eqnarray*}}
\def\ees{\end{eqnarray*}}

\DeclareMathOperator{\Pic}{Pic}

\DeclareMathOperator{\Hom}{Hom}
\DeclareMathOperator{\Ext}{Ext }
\DeclareMathOperator{\Tr}{Tr} 

\DeclareMathOperator{\Res}{Res} 
\DeclareMathOperator{\rank}{rank}

\def\Jac{{\mathcal J}}
\def\J{\Jac}
\def\bi{{\overline{i}}}
\def\pr{{\rm pr}}

\def\rs{{C}}
\def\bV{{\bf{V}}}
\def\bE{{\bf{E}}}
\def\bPhi{{\bf{\Phi}}}
\def\bphi{{\bm{\phi}}}

\def\C{\mathbb{C}}
\def\M{{\mathcal{M}}}

\def\calF{{\mathcal{F}}}

\def\calP{\mathcal{P}}
\def\calH{\mathcal{H}}

\def\N{\mathbb{Z}_{\geq 0}}

\def\R{\mathbb{R}}

\def\Q{\mathbb{Q}}

\def\calO{{\mathcal O}}

\def\Z{\mathbb{Z}}

\newcommand{\nc}{\newcommand}

\def\ch{{\rm ch}}
\def\td{{\rm td}}


\usepackage{leftidx}


%






\nc{\op}[1]{\mathop{\mathchoice{\mbox{\rm #1}}{\mbox{\rm #1}}
{\mbox{\rm \scriptsize #1}}{\mbox{\rm \tiny #1}}}\nolimits}
\nc{\al}{\alpha}

\nc{\ep}{\varepsilon} 
\nc{\ga}{\gamma} 
\nc{\Ga}{\Gamma}
\nc{\la}{\lambda} 
\nc{\La}{\Lambda} 
\nc{\si}{\sigma}
\nc{\Sig}{{\Gamma}} 
\nc{\Om}{\Omega} 
\nc{\om}{\omega}

\nc{\SL}{\mathrm{SL}} 
\nc{\GL}{\mathrm{GL}} 
\nc{\PGL}{\mathrm{PGL}}
\nc{\G}{\mathrm{G}}

\nc{\W}{\mathrm{W}}
\nc{\Lg}{\mathrm{L}}
\nc{\Pg}{\mathrm{P}}
\nc{\T}{{\mathbb T}}
\nc{\calL}{{\mathcal L}}
\nc{\Sym}{{\rm Sym}}

\newcommand{\ft}{{\frak t}}

\newcommand{\fh}{{\frak h}}

\newcommand{\fm}{{\frak m}}
\newcommand{\vt}{{\vartheta}}
\newcommand{\ra}{{\rightarrow}}
\renewcommand{\H}{\mathrm{H}}

\nc{\Frob}{\mathrm{Frob}}

\def\U{{\mathrm{U}}}
\def\SU{{\mathrm{SU}}}

\nc{\rN}{\mathrm{N}}


\usepackage{longtable}





\nc{\cpt}{{\op{cpt}}} \nc{\Dol}{{\op{Dol}}} \nc{\DR}{{\op{DR}}}
\nc{\B}{{\op{B}}} \nc{\Triv}{\op{Triv}} \nc{\Hod}{{\op{Hod}}}
\nc{\Log}{{\op{Log}}} \nc{\Exp}{{\op{Exp}}} \nc{\Est}{E_{\op{st}}}
\nc{\Hst}{H_{\op{st}}} \nc{\Left}[1]{\hbox{$\left#1\vbox to
10.5pt{}\right.\nulldelimiterspace=0pt \mathsurround=0pt$}}
\nc{\Right}[1]{\hbox{$\left.\vbox to
10.5pt{}\right#1\nulldelimiterspace=0pt \mathsurround=0pt$}}
\nc{\LEFT}[1]{\hbox{$\left#1\vbox to
15.5pt{}\right.\nulldelimiterspace=0pt \mathsurround=0pt$}}
\nc{\RIGHT}[1]{\hbox{$\left.\vbox to
15.5pt{}\right#1\nulldelimiterspace=0pt \mathsurround=0pt$}}

\nc{\bee}{{\bf E}} 

 \begin{document}

\title{Mirror symmetry with branes by equivariant Verlinde formulae}

\author{ Tam\'as Hausel
\\ {\it IST Austria} 
\\{\tt tamas.hausel@ist.ac.at} \and Anton Mellit\\ {\it University of Vienna} 
\\{\tt anton.mellit@univie.ac.at }  \and Du Pei \\ {\it QGM, Aarhus University} \\ {\it Walter Burke Institute for Theoretical Physics, Caltech}
\\{\tt pei@caltech.edu}  }
\pagestyle{myheadings}

\maketitle
\begin{abstract} We find an agreement of equivariant indices of semi-classical homomorphisms  between pairwise mirror branes in the $\GL_2$ Higgs moduli space on a Riemann surface. On one side we have the components of the Lagrangian brane of $\U(1,1)$ Higgs bundles whose mirror was proposed by Nigel Hitchin to be certain even exterior powers of the hyperholomorphic Dirac bundle on the $\SL_2$ Higgs moduli space. The agreement arises from a mysterious functional equation. This gives strong computational evidence for Hitchin's proposal. 
\end{abstract}		
\section{Introduction} In the conference "Hitchin 70: Differential Geometry and Quantization" in Aarhus in September 2016, the first and third author gave talks on recent results on the understanding of equivariant Verlinde formulae on Higgs moduli spaces. Here we explain how such techniques can be used to compute equivariant indices of semi-classical homomorphisms between branes on Higgs moduli spaces, and how in turn this could be used to give non-trivial computational evidence for mirror symmetry proposals of Hitchin in \cite[\S 7]{hitchin}. 

 Hitchin introduced the moduli space of Higgs bundles on a Riemann surface \cite{hitchin-self} in 1987. 
In 2006, the work of Kapustin-Witten \cite{kapustin-witten} proposed an  understanding of the Geometric Langlands program using the $S$-duality of 4D supersymmetric Yang-Mills theory. One of the main statements is that $S$-duality induces mirror symmetry between the moduli space $\M_\DR(\G)$  of flat $\G$-connections on $C$ and $\M_\DR(\G^L)$ for the Langlands dual group $\G^L$. The precise statement is still missing, but we have more understanding \cite{donagi-pantev} in the semi-classical limit where the mirror symmetry should reduce to a relative Fourier-Mukai type equivalence between $D(\M(\G))$ and			$D(\M(\G^L))$, where $\M(\G)$ denotes certain moduli space of $\G$-Higgs bundles on $C$.  Even this duality is not completely understood as we lack the description of the Fourier-Mukai transform on the most singular fibers of the Hitchin map. Consequently, we have few global results proving aspects of this duality. 

One global computational result was achieved in \cite{hausel-thaddeus} in 2002. There, as a consequence of mirror symmetry, a conjecture was proposed for the agreement of certain Hodge numbers of $\M(\SL_n)$ and $\M(\PGL_n)$ which were proved there for $n=2,3$. This conjecture for every $n$ has just recently been settled in \cite{groechenig-etal} using $p$-adic integration. 

Here we are studying another global  approach where deeper properties of the conjectured mirror symmetry could be 	computationally verified. We set $\G=\GL_2$ and $C$ a smooth complex projective curve of genus $g>1$. The components of the $\M(\U(1,1))$ Higgs moduli space inside the $\M(\GL_2)$ moduli space can be labeled by certain characteristic classes as explained in \cite{hitchin}. There are $g$ such components and we will denote these Lagrangians by $$L_0,\dots,L_{g-1}\subset \M(\GL_2).$$ To be more precise $L_i$ is the locus of Higgs bundles of the form $$M_1\oplus M_2 \stackrel{\Phi}{\to} M_1K_C\oplus M_2K_C,$$ where $M_j$ are line bundles with $\deg(M_1)=g-1-i$, $\deg(M_2)=i-g+1$ and $\Phi$ off-diagonal. For $i=0,\dots,g-2$ the Lagrangians are located in the stable locus $\M(\GL_2)^s$ and so are smooth and $L_i$ is isomorphic to an affine 	bundle over the $2i$th symmetric product of $C$ times the Jacobian of degree $g-1-i$ line bundles on $C$. The last component $L_{g-1}$ is special as it contains strictly semi-stable points as well. 

  As is explained in \cite[\S 7]{hitchin} $L_i$ is a $\rm BAA$ brane whose mirror should be a $\rm BBB$ brane i.e. a hyperholomorphic sheaf on the mirror $\M(\GL_2)$ (cf. also \cite{baraglia-schaposnik} and \cite[\S 6]{GW}). Its support will be $\M(\SL_2)\subset \M(\GL_2)$ and roughly speaking it is  constructed from a universal bundle $\bE$ on $\M(\SL_2)\times C$ by taking $\bV:=R^1\pi_*(\bE)$ by the projection $\pi$ to the first factor. Then Hitchin proposes that the brane $L_i$ should be mirror to $\Lambda^{2i}\bV$. This was checked to be correct on a generic fiber $\chi_a:=\chi^{-1}(a)$  of the Hitchin map for $a\in \calH_{\SL_2}$ in the sense that ${\cal{O}}_{L_i} |_{\chi_a}$ is Fourier-Mukai dual to $\Lambda^{2i}\bV |_{\chi_a}$. Additionally, the fact that $\Lambda^{2i}\bV$ carries a natural hyperholomorphic structure gave more credence to the proposal that $\Lambda^{2i}\bV$ should be the mirror of $L_i$. However it is unclear how to extend the relative Fourier-Mukai transform to the most singular fibers of the Hitchin map, in particular to the nilpotent cone $\chi_0$, and so we did not have direct checks of the correctness to the proposal that $L_i$ is indeed relative Fourier-Mukai dual to $\Lambda^{2i}\bV$.  

There have recently been several works computing equivariant Verlinde formula on the moduli space of Higgs bundles. Let $\calL$ be the determinant line bundle on $\M(\G)$ where $\G$ is a  simply-connected semisimple  group, such as $\G=\SL_n$. The natural $$\T:=\C^\times$$ action on $\M(\G)$ by scaling the Higgs field lifts to a $\T$-equivariant structure on $\calL$. Although $H^*(\M(\G);\calL^k)$ is infinite dimensional,  the induced $\T$-action will have finite dimensional weight spaces and non-trivial only for non-positive weights: $$H^*(\M(\G);\calL^k)=\bigoplus_{j=\in \N} H^*(\M(\G);\calL^k)^{-j}.$$ We define for the $\T$-equivariant line bundle $\calL^k$ (and similarly later for a $\T$-equivariant sheaf) the equivariant index as \beq \label{character} \chi_\T\left(\M(\G);\calL^k\right)=\sum_{i,j} (-1)^i\dim(H^i(\M(\G),\calL^k)^{j})t^{-j}\in \Z[[t]].\eeq   In a series of papers \cite{GP,GPYY,AGP} following insights from studying path integrals in certain supersymmetric quantum field theories and TQFT techniques, precise formulas have been achieved for \eqref{character}. Equivalent formulas have been also found in \cite{HL}. There is also a more combinatorial understanding of the results for $\G=\SL_n$ as residue formulas in a TQFT framework in \cite{hausel-szenes}.  

In this paper we compute similar characters but only on $\M(\GL_2)$ and eventually on $\M(\SL_2)$. Namely, we will consider the (generically) vector bundles $\Lambda^{2i}\bV$ mentioned above, with support in $\M(\SL_2)$ together with a $\T$-equivariant structure. We will also consider the analogues of determinant bundle $\calL$ on $\M(\GL_2)$ with a $\T$-equivariant structure and in particular the $\T$-equivariant coherent sheaves $\calL^2\calO_{L_i}$ on $\M(\GL_2)$ (for the detailed constructions see Section~\ref{background}). With the notation   $\calL_i:=\calL^2\calO_{L_i}$ for $0\leq i < g-1$ 
and $\Lambda_j:=\Lambda^{2j}\bV$  our main result is  the following

\begin{theorem}\label{mainth} For $0\leq i,j <g-1$  we have
\beq \label{mainf} \chi_\T\left(\M(\GL_2);\calL_j\otimes \Lambda_i\right)=\chi_\T\left(\M(\GL_2);\calL_i\otimes \Lambda_j\right)\in \Z[[t]].\eeq
\end{theorem}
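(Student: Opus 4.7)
The plan is to reduce both sides of \eqref{mainf} to explicit integrals over a common parameter space, and then to exhibit the claimed $(i,j)\leftrightarrow(j,i)$ symmetry via a ``functional equation'' on the resulting formulas. Since $\calL_j=\calL^2\otimes\calO_{L_j}$ is set-theoretically supported on $L_j$ and $\Lambda_i=\Lambda^{2i}\bV$ on $\M(\SL_2)\subset\M(\GL_2)$, the effective support of the tensor product is $L_j\cap\M(\SL_2)$, and
\[
\chi_\T(\M(\GL_2);\calL_j\otimes\Lambda_i)=\chi_\T\bigl(L_j\cap\M(\SL_2);\,\calL^2|_{L_j}\otimes\Lambda^{2i}\bV|_{L_j}\bigr),
\]
with the analogous reduction on the right-hand side. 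I would then use the description of $L_j$ from \cite{hitchin} as an affine bundle over $\Sym^{2j}(C)\times\Pic^{g-1-j}(C)$ parametrised by $(M_1,M_2,\Phi_{12},\Phi_{21})$; the $\SL_2$-condition $\det=\calO_C$ cuts out the sublocus $M_2\cong M_1^{-1}$, and the affine-bundle fibre direction can be integrated out against the determinant line bundle $\calL^2$ by a fibrewise equivariant Euler characteristic computation.

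The next step is to compute $\Lambda^{2i}\bV|_{L_j\cap\M(\SL_2)}$ from the universal Higgs bundle. On this locus $\bE$ takes the explicit form $\mathcal N\oplus\mathcal N^{-1}$ for a universal line bundle $\mathcal N$ on $(L_j\cap\M(\SL_2))\times C$, so $\bV=R^1\pi_*\bE$ restricts to $R^1\pi_*\mathcal N\oplus R^1\pi_*\mathcal N^{-1}$ and its $2i$-th exterior power decomposes as $\bigoplus_{a+b=2i}\Lambda^a R^1\pi_*\mathcal N\otimes\Lambda^b R^1\pi_*\mathcal N^{-1}$; the $\T$-weights on each piece are determined by the hyperholomorphic twist of $\bV$ and by the $\T$-action on $\bE$. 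Combining this with the equivariant Verlinde technology of \cite{GP,HL,hausel-szenes}, adapted from $\SL_n$ to the present $\GL_2$ setting, I would express $\chi_\T(\M(\GL_2);\calL_j\otimes\Lambda_i)$ as a power series in $t$ whose coefficients are integrals of exponentials of tautological Chern classes and theta-type expressions over $\Sym^{2j}(C)\times\Pic^{g-1-j}(C)$.

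Running the same procedure for $\chi_\T(\M(\GL_2);\calL_i\otimes\Lambda_j)$ produces a second such integral in which the roles of the symmetric-product degree $2j$ and the wedge-power degree $2i$ are interchanged, with a prima facie very different-looking integrand. The main obstacle, and the heart of the proof, is to establish the ``mysterious functional equation'' advertised in the abstract that identifies the two expressions. I would expect it to come from a theta-function reciprocity on the Jacobian of $C$ together with a symmetric-function identity that trades the contribution of the $2i$ points of $C$ coming from $\Lambda^{2i}\bV$ for the contribution of the divisor in $\Sym^{2j}(C)$ coming from $\calO_{L_j}$, and vice versa. Proving this identity rigorously, rather than merely numerically, is where the real work lies.
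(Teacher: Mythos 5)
Your overall strategy --- restrict to the common support, express both indices as explicit equivariant Riemann--Roch integrals over a symmetric-product/Jacobian parameter space, and then look for a functional equation exchanging $i$ and $j$ --- is the same as the paper's. But the proposal has a genuine gap exactly where you admit it does: the functional equation itself is never produced, only conjectured to follow from ``theta-function reciprocity on the Jacobian together with a symmetric-function identity.'' That identity \emph{is} the theorem; without it you have two different-looking integrals and no proof. In the paper the mechanism is much more concrete and has nothing to do with theta functions: after applying Zagier's residue formula (from \cite{thaddeus}) the index becomes a double contour integral $\frac{1}{(2\pi i)^2}\oint\oint h^{g-1}f^{-i}s^{-2j}\,\frac{df}{f}\frac{ds}{s}$ in explicit rational functions $f(z,s,v)$, $h(z,s,v)$, and the symmetry is exhibited by the rational change of variables $(z,s)\mapsto\bigl(w,f^{1/2}\bigr)$ with $w=\bigl(\tfrac{(sz+v)(vz+s)}{(szv+1)(sv+z)}\bigr)^{1/2}$, which satisfies $f(w,f^{1/2},v)=s^2$ and $h(w,f^{1/2},v)=h(z,s,v)$ and hence literally swaps $f\leftrightarrow s^2$ while preserving $h$ and the measure $\frac{df\wedge ds}{fs}$ (Proposition~\ref{symmetryformal}). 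You would need to find this (or an equivalent) identity to close the argument.

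Two further points would derail the computation as you set it up. First, $\bV$ is not $R^1\pi_*\bE$ of the underlying bundle but the first hypercohomology of the Higgs complex $\bE\stackrel{\bPhi}{\to}\bE\otimes K_C$; on $L_j\cap\M(\SL_2)$ the Higgs field is off-diagonal and nonzero, so your splitting $\bV=R^1\pi_*\mathcal N\oplus R^1\pi_*\mathcal N^{-1}$ drops both the $K_C$-twist and the $\T$-weight shift, and would give the wrong rank ($4g-4$, not $2g-2+2g-2$ computed from $\mathcal N$ alone without $K_C$) and the wrong equivariant Chern character \eqref{chlambdas}. Second, the correct base is not the product $\Sym^{2j}(C)\times\Pic^{g-1-j}(C)$ but the fibred product $F'_j=C_{2j}\times_{\Jac_{2j}}\J_{\overline{\jmath}}$, a $2^{2g}$-cover of $C_{2j}$; moreover the universal bundle there is not an $\SL_2$-bundle (its determinant $M_{F'_j}$ has no square root), which forces the twist in \eqref{restrict} and the associated $\pm s'$ averaging in \eqref{extchar}. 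These are fixable, but as written they would propagate errors into the integrands before you ever reach the symmetry step.
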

In Corollary~\ref{mainc} we will explain how to extend this symmetry for the case of $i=g-1$. 

We can interpret the left-hand side of \eqref{mainf} as the equivariant index of derived homomorphisms from $(\Lambda^{2i} \bV)^\vee \cong \Lambda^{2i} \bV$ to $\calL^2\calO_{L^j}$ and the right-hand side  as the equivariant index of derived homomorphisms from $\calL^2\calO_{L^i}^\vee\cong\calL^2\calO_{L^i}$ to $\Lambda^{2j} \bV$. For more details on this see Section~\ref{reflect}. This way our Theorem~\ref{mainth} provides computational evidence for Hitchin's proposal in \cite[\S 7]{hitchin}. 

Interestingly, most of the ingredients used here have already appeared in the first author's PhD dissertation \cite{hausel-thesis} written under the supervision of Nigel Hitchin. In a sense our computation could have been done almost $20$ years ago. The present paper thus outlines the importance  of ideas from physics, in particular  the work of Kapustin-Witten \cite{kapustin-witten} in 2006 and further mathematical insights by Hitchin \cite[\S 7]{hitchin} in 2013.

The plan of the paper is as follows. In Section~\ref{background} we introduce the $\C^\times$-equivariant coherent sheaves $\calL_i$ and $\Lambda_i$ on $\M(\GL_2)$. In Section~\ref{computation} we compute the equivariant index $\chi_\T(\M(\GL_2);\calL_i\otimes \Lambda_j)$ for $i<g-1$, by reducing the computation via Hirzebruch-Riemann-Roch to an integral on the $2i$-th symmetric product of $C$, which we evaluate using a residue formula due to Zagier.  In Section~\ref{symmetrysmoothi} we find a change of variables which will imply our main symmetry observation for $i,j<g-1$. In Section~\ref{global} following the technique of Teleman-Woodward \cite{TW} we compute  the equivariant index on the whole moduli stack and  then in Section~\ref{symmetryg-1} we prove Corollary~\ref{mainc}. Finally, in Section~\ref{reflect} we sketch how this should be a consequence of mirror symmetry.

\noindent {\bf Acknowledgements.} We would like to thank Sergei Gukov and Paul Harmsen for encouragement and their interest in this paper,  J{\o}rgen E.~Andersen, Andr\'as Szenes, Laura Schaposnik, Nigel Hitchin, Penghui Li, Iordan Ganev and the referee for useful explanations and comments. The research in this paper was supported   by  an Advanced Grant ``Arithmetic and physics of Higgs moduli spaces'' no.\ 320593 of the European Research Council, the NCCR SwissMAP of the Swiss National Foundation,  START-Project Y963-N35 of the Austrian Science Fund (FWF) , the  center of excellence grant ``Center for Quantum Geometry of Moduli
Space'' from the Danish National Research Foundation (DNRF95), by the Walter Burke Institute
for Theoretical Physics, and by the U.S. Department of Energy, Office of Science, Office of High
Energy Physics, under Award Number DE-SC0011632. In particular, the idea of the computation in this paper arose during the 
\href{https://bernoulli.epfl.ch/events/1001}{"Retreat on Higgs bundles, real groups, Langlands duality and mirror symmetry"} in the Bernoulli center at EPF Lausanne in January 2016. 
\section{Background} \label{background}

Let $C$ be a smooth complex projective curve of genus $g>1$. Let $\M(\GL_2)$ denote the moduli space of semistable degree $0$ rank $2$ Higgs bundles on $C$. We recall \cite{bradlow-etal,hitchin,schaposnik} that the $\U(1,1)$-Higgs bundles in $\M(\GL_2)$ are degree $0$ rank $2$ Higgs bundles \beq\label{higgs}E\stackrel{\Phi}{{\to}} E\otimes K_C\eeq such that $\Phi\neq 0$ and $$E\stackrel{-\Phi}{{\to}} E\otimes K_C$$ is equivalent with \eqref{higgs}. In other words there exists an $a:E\to E$ automorphism of $E$ such that \bes\Phi\circ a=-\Phi.\ees When $\Phi\neq 0$ this implies that such a Higgs bundle has the form \beq \label{form} M_1\oplus M_2\stackrel{\left(\begin{array}{cc}0 & \phi_2 \\ \phi_1 & 0 \end{array}\right)}{\longrightarrow}  (M_1\oplus M_2)\otimes K_C\eeq is a direct sum of two line bundles and off-diagonal Higgs field. We can assume $$\deg M_2\leq \deg M_1=g-1-i$$ for some integer $i\leq g-1$. When $i<g-1$ then $$\phi_1\in H^0(C;M_1^{-1}M_2K_C)$$ cannot be $0$ because of semistability. As $$\deg(M_1^{-1}M_2K_C)  = 2i,$$ we have that $i\geq 0$.


The locus of Higgs bundles of the form \eqref{form} with $\deg(M_1)=g-1-i$ for  $0\leq i \leq g-1$ is denoted by $$L_i\subset \M(\GL_2).$$ 
With an argument similar to the one below for $\SL_2$ one can show that for $0\leq i <g-1$ the locus $L_i\subset \M(\GL_2)$ is isomorphic with a total space of a vector bundle over $\J_\bi\times C_{2i}$. We denote by $\J_i$ the Jacobian of degree $i$ line bundles on $C$, $C_j$ the $j$th symmetric product of the curve $C$ and $\bi:=g-1-i.$

We will also need to define the sheaf $\calL$. Recall the tensor product map $$\tau:\M(\SL_2)\times T^*\J_0 \to \M(\GL_2),$$ which is a Galois cover
with Galois group $\J[2]$ the two torsion points on the Jacobian. We define $\calL\in \Pic(\M(\GL_2))$ such that $$\tau^*(\calL)=\calL_{\SL_2}\boxtimes \calO_{T^*\J_0},$$ where $\calL_{\SL_2}$ is the determinant line bundle on $\M(\SL_2)$ constructed below \eqref{detsl2}. $\calL$ is not unique, because $$\tau^*:\Pic(\M(\GL_2))\to \Pic(\M(\SL_2)\times \J_0)$$ has a kernel isomorphic to $$\Pic(\M(\GL_2))[2]\cong \J_0[2].$$ However when restricted to $\M(\U(1,1))$ the ambiguity disappears. For $0\leq i< g-1$ we will denote $$\calL_i:=\calL^2\otimes \calO_{L_i}$$ a coherent sheaf on $\M_{\GL_2}$, while for $i=g-1$ we define
\beq \label{lg-1} \calL_{g-1}:=(\calL^2\oplus \calL^2)\otimes \calO_{L_{g-1}}.\eeq

In our computation below we will restrict to the part $$L^\prime_i:=L_i\cap \M(\SL_2)\subset \M(\SL_2) $$ inside the moduli space  $\M(\SL_2)$  of semistable rank $2$ Higgs bundles with trivial determinant and trace-free Higgs field. In other words $L^\prime_i$ is a component of the moduli space of  $\SU(1,1)$-Higgs bundles in $\M(\SL_2)$.
This means to set $M:=M_1$ and choose $M_2=M^{-1}$ in  \eqref{form}.

 Now we give a more detailed description  of $L^\prime_i\subset \M(\SL_2)$ for $0\leq i \leq g-2$.  For similar discussions  see \cite[Lemma 6.1.2]{hausel-thesis} and \cite[(6.1)]{HT2}.

	Define
maps
$\Jac_{\bi}\to \Jac_{2i}$ by sending 
$M\mapsto M^{-2} K$ and the Abel-Jacobi map 
$C_{2i}\to \Jac_{2i}$ by sending $D\mapsto L(D)$, then we construct the fibred product of the these
maps: 
\begin{eqnarray} F^\prime_i:=C_{2i}\times_{\Jac_{2i}}
\J_{\bi}.\label{fibredproduct} \end{eqnarray}
We have the two projections $\pr_{\J_{\bi}}:F^\prime_i\to \J_{\bi}$ and 
$\pr_{C_{2i}}:F^\prime_i\to C_{2i}$. 
By construction	 $F^\prime_i$ is isomorphic to the moduli
space of complexes: $M\stackrel{\phi}{\to}M^{-1} K_C$, with
$\deg(M)=\bi$ and $\phi\in
H^0(\Sigma;M^{-2} K_C),$ in other words of nilpotent $\SL_2$ Higgs bundles $$M\oplus M^{-1}\stackrel{\Phi}{\to} MK_C\oplus M^{-1}K_C$$ given by $$\Phi=\left(\begin{array}{cc} 0 & 0 \\ 
\phi & 0 \end{array}\right).$$ Let $$\Delta_{2i}\subset C\times C_{2i}$$ denote the universal divisor and $\calP_{\bi}$ a normalized universal bundle on $C\times \J_{\bi}$. By abuse of notation we will denote  their pull-backs by $\pr_{C_{2i}}$ and respectively $\pr_{\J_{\bi}}$
to $C\times F^\prime_i$       with the same $\Delta_{2i}$ and $\calP_{\bi}$. As $i<g-1$ we have $$\deg(K_C^2(-D))>2g-2$$  for $D\in C_{2i}$ and so $$H^1(C;K^2_C(-D))=0.$$ Consequently $$E_i:={\pr_{C}}_*\left(K_C^2(-\Delta_{2i})\right)$$ is a vector bundle on $F^\prime_i$.           

$E_i$ parametrizes $\SU(1,1)\subset \SL_2$ Higgs bundles as follows. First note that $\calP_{\bi}^2K_C^{-1}(\Delta_{2i})$ restricted to $x\times C$ is a trivial line bundle for every $x\in F^\prime_i$ and thus by the push-pull formula there is a line bundle $M_{F^\prime_i}$ on $F^\prime_i$ such that \beq\label{basic}\calO(\Delta_{2i})\cong M_{F^\prime_i}K_C\calP^{-2}_\bi.\eeq
Consider the rank $2$ vector bundle
\beq\label{univerfi}\bE_i:=\calP_{\bi}\oplus \calP_{\bi}^{-1}M_{F^\prime_i}\eeq on $C\times F^\prime_i$ and the Higgs field $$\bPhi_i=\left(\begin{array}{cc} 0 & \bphi_i^\prime \\ 
\bphi_i & 0 \end{array}\right)$$ with $$\bphi_i\in H^0(C\times E_i;\calP_i^{-2}M_{F^\prime_i}K_C)=H^0(C\times E_i;\calO(\Delta_{2i}))$$ given by the universal divisor $\Delta_{2i}$ and $$\bphi_i^\prime\in H^0(C\times E_i;\calP_i^{2}L^{-1}_{E_i}K_C)= H^0(C\times E_i;K_C^2(-\Delta_{2i}))$$ the tautological section. 

Thus $(\bE_i,\bPhi_i)$ is a family of stable $\SL_2$ Higgs bundles parametrized by (the total space of) $E_i$ thus we get an embedding $$\iota_i:E_i\to \M(\SL_2)^{s}\subset \M(\SL_2).$$ We denote its image $$L^\prime_i:=\iota_i(E_i)\subset \M(\SL_2)$$ and note that it parametrizes all Higgs bundles of the form $M\oplus M^{-1}\stackrel{\Phi}{\to} MK_C\oplus M^{-1}K_C$  with Higgs field $$\Phi=\left(\begin{array}{cc} 0 & \phi^\prime \\ 
\phi & 0 \end{array}\right)$$ and such that $\deg M=\bi$. This precisely agrees with a component of the $\SU(1,1)$ Higgs moduli space as described in \cite{schaposnik} (see also \cite[\S 7]{hitchin}). Thus $L^\prime_i$ is a connected smooth Lagrangian subvariety in the stable locus of $\M(\SL_2)$. Note that there is one more component of $\M(\SU(1,1))$ inside $\M(\SL_2)$, namely $L^\prime_{g-1}$. However, it will no longer be contained in the stable locus and will not be smooth.

We are now going to construct the vector bundles $\Lambda_j$ on $L^\prime_i$ for $j=0,\dots,g-1$. Hitchin \cite[\S 7]{hitchin} defines $$\Lambda_j:=\Lambda^{2j}\bV$$ locally,  where $\bV$ is the Dirac bundle on $U\to \M(\SL_2)^{s}$ for an \'etale open $U$. It is 
constructed as $$\bV:=R^1\pr_{C*}(\bE_U\stackrel{\bPhi_U}{\to}\bE_U K_C)$$ where 
$$\bE_U\stackrel{\bPhi_U}{\to}\bE_U K_C$$ is a universal $\SL_2$-Higgs bundle on $C\times U$. Universal Higgs bundle does not exist on the whole $C\times \M(\SL_2)^{s}$ but it exists on an \'etale open covering, and the obstruction to glue vanishes for the even exterior powers $\Lambda_j=\Lambda^{2j}\bV$. We note that $\bV$ is a vector bundle on $\M(\SL_2)^s$ as for stable degree $0$ Higgs bundles $(E,\Phi)$ the zeroth and second
hypercohomology of $E\stackrel{\Phi}{\to} EK_C$ vanishes by \cite[Corollary 3.5]{hausel-vanishing}.

Finally, we will extend the construction of $\bV$ to the whole of $\M(
\SL_2)$ as a complex of coherent sheaves by defining it for a $U\subset \M(\SL_2)$ by the formula $$\bV:=R\pr_{C*}(\bE_U\stackrel{\bPhi_U}{\to}\bE_U K_C)[1].$$ Then we define $$\Lambda_j:=\Lambda^{2j}\bV$$ as a complex of coherent sheaves on the whole $\M(\SL_2)$.

When we restrict this construction of $\Lambda_j$ to $L^\prime_i$ for $i<g-1$ then we have almost a universal bundle on $C\times L^\prime_i$. Namely, $(\bE_i,\bPhi_i)$ is a family of stable $\SL_2$ Higgs bundles on $C\times L^\prime_i$. However $\bE_i$  is not itself an $\SL_2$  bundle, as $$\det(\bE_i)=M_{F^\prime_i}.$$ $M_{F^\prime_i}$ is pulled back from $L^\prime_i$ and ultimately from $F^\prime_i$, which we will see has no square root.
However, on the even exterior powers we can just tensor with integer
powers of $\det(\bE_i)$ to get the restriction of $\Lambda_j$ to $L^\prime_i$ and so we get:
\beq \label{restrict}\Lambda_j|_{L^\prime_i}\cong M_{F^\prime_i}^{-j}\Lambda^{2j}(R^1\pr_{L^\prime_i*}(\bE_i\stackrel{\bPhi_i}{\to}\bE_i K_C))\eeq which gives the desired vector bundle on $L^\prime_i$.      

We will also need $\T$-equivariant structure on $\Lambda_j$. To construct this we note
that Hitchin's construction can be done on $\T$-equivariant \'etale open $U\to \M(\SL_2)^s$, where from the universal property one gets a $\T$-equivariant structure
on $\bE_U\stackrel{\bPhi_U}{\to}\bE_U K_C$ (cf. \cite[\S 4]{HT1}). They will then glue to yield a $\T$-equivariant
structure on $\Lambda_j$. The $\T$-action we endow $\Lambda_j$  is the one twisted by the trivial line bundle with a weight $-j$ $\T$-action on it.  

To see the $\T$-equivariant structure on $\Lambda_j|_{L^\prime_i}$ in \eqref{restrict} we endow 
$\calP_i$ with the trivial $\T$-action and $ M_{F^\prime_i}$ with a weight minus one $\T$-action, which will give the $\T$-equivariant structure on $\bE_i$ in \eqref{univerfi}. Then $(\bE_i,\bPhi_i)$ becomes a $\T$-equivariant Higgs bundle, provided we twist the second term $\bE_i\otimes K_C$ with a weight one $\T$-equivariant trivial line bundle. The $\Lambda_j|_{L^\prime_i}$ will then inherit a $\T$-equivariant structure, which we will further twist with a weight $-j$ trivial line bundle. 

Finally, we will need to construct the $\T$-equivariant determinant line bundle $\calL_{\SL_2}$ on $\M(\SL_2)$ (which by abuse of notation we will abbreviate to $\calL$). It is again constructed \cite{quillen} by gluing together \beq\label{detsl2}\Lambda^{2g-2}(R\pr_{C*}(\bE_U)[1])\eeq from open sets $U\subset \M(\SL_2)$ in an open covering. 

Again on $L^\prime_i$ we can compute it from our family $(\bE_i,\bPhi_i)$ and get 
\beq\label{determinant}\calL|_{L^\prime_i}=M_{F^\prime_i}^{-g+1}\Lambda^{2g-2}(R\pr_{C*}(\bE_i)[1]).\eeq

We can now define our other family of $\T$-equivariant sheaves \beq\label{calli}\calL^\prime_i:=\iota_{i*} \calL^2|_{L^\prime_i}\eeq on
$\M(\SL_2)$, supported on $L^\prime_i$.  For a variable $s$ we also denote \beq\label{lambdas}\Lambda_{s^2}:= \bigoplus_{j=0}^{2g-2} s^{2j}\Lambda_j.\eeq

 \section{Computation for $i<g-1$}\label{computation} First we start with a definition 
\begin{definition} Let $X$ be a semiprojective $\T$-variety (see \cite{hausel-large}) and $\calF$ an $\T$-equivariant sheaf on it. Then define $$\chi_\T(X;\calF):=\sum_{i,j} (-1)^i \dim(H^i(X;\calF)^{-j}) t^j,$$ where $H^i(X;\calF)^{-j}$ is the $-j$-th weight space of the induced $\T$ action on $H^i(X;\calF)$.
\end{definition}
We expect that for a semi-projective $X$ the equivariant index satisfies $\chi_\T(X;\calF)\in \Z((t))$.
 The following then is one of our main computational tool.
\begin{proposition}\label{smoothformula} For $0\leq i < g-1$  and denoting $v=t^{1\over 2}$ we have 
\begin{multline}\label{main}\chi_\T(\M(\GL_2);\calL_i\otimes\Lambda_{s^2})=\chi_\T(L^\prime_i;\calL^\prime_i\otimes \Lambda_{s^2})= \\  {1\over 2\pi i}\oint_{|z^2-v^2|=\epsilon}z^{4\bi} \frac{ \left((1+ \frac{s}{zv})(1+svz)\right)^{g-1+\bi} \left((1+{sv\over z} )(1+{sz\over v})\right)^{i} }{(1-v^2z^2)^{2\bi+g-1}( 1-v^2/z^2)^{2i-g+1}} \\ \left(4+\frac{  {sv\over z}}{1+{sv \over z}}+\frac{svz {}}{1+svz}+\frac{4v^2z^2}{1-v^2z^2}+{\frac{4v^2}{z^2} \over 1 - \frac{v^2}{z^2}}-\frac{ {s\over zv} }{1+{s\over zv}}-\frac{{sz\over v}}{1+{sz\over v}}\right)^g {dz \over z}\in \Z[[t,s]].
\end{multline}
\end{proposition}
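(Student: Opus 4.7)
The plan is to reduce the computation first from $\M(\GL_2)$ to $L^\prime_i$ by a support argument, then from $L^\prime_i$ to $F^\prime_i$ using its description as the total space of a vector bundle, and finally to a one-variable contour integral via Hirzebruch--Riemann--Roch and Zagier's residue formula.

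The first equality $\chi_\T(\M(\GL_2);\calL_i\otimes\Lambda_{s^2}) = \chi_\T(L^\prime_i;\calL^\prime_i\otimes \Lambda_{s^2})$ is a support argument: $\calO_{L_i}$ is supported on the Lagrangian $L_i \subset \M(\GL_2)$ while $\Lambda_{s^2}$ is supported on $\M(\SL_2) \subset \M(\GL_2)$, so their product is supported on $L_i \cap \M(\SL_2) = L^\prime_i$. Under $\iota_i$ this matches $\iota_i^*(\calL^\prime_i \otimes \Lambda_{s^2})$, and both equivariant Euler characteristics compute the same graded vector space.

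For the main computation I exploit that $L^\prime_i = \iota_i(E_i)$ is the total space of the rank $3g-3-2i$ vector bundle $E_i = \pr_{C*}(K_C^2(-\Delta_{2i})) \to F^\prime_i$, on whose fibers $\T$ acts with weight one. By \eqref{restrict} and \eqref{determinant} both $\Lambda_j|_{L^\prime_i}$ and $\calL|_{L^\prime_i}$ are pulled back from $F^\prime_i$, so for $\pi : L^\prime_i \to F^\prime_i$
\[ \chi_\T(L^\prime_i;\, \pi^*\calF) = \chi_\T(F^\prime_i;\, \calF \otimes \Sym^\bullet(t E_i^\vee)). \]
I then apply Grothendieck--Riemann--Roch along $\pr_C : C \times F^\prime_i \to F^\prime_i$ using the universal Higgs bundle $\bE_i = \calP_\bi \oplus \calP_\bi^{-1} M_{F^\prime_i}$ together with the identity \eqref{basic} $\calO(\Delta_{2i}) \cong M_{F^\prime_i} K_C \calP_\bi^{-2}$, in order to express the $\T$-equivariant Chern characters of $\calL^2|_{L^\prime_i}$ and of the generating sheaf $\Lambda_{s^2}|_{L^\prime_i}$ as closed-form expressions on $F^\prime_i$. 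The splitting of $\bE_i$ into two line-bundle summands produces, after the exterior-power generating-function resummation defining $\Lambda_{s^2}$, the two factored polynomials $((1+s/(zv))(1+svz))^{g-1+\bi}$ and $((1+sv/z)(1+sz/v))^i$ of \eqref{main}, with exponents reflecting the virtual $\pr_*$-ranks of the two summands; the factor $\Sym^\bullet(t E_i^\vee)$ accounts for the $(1-v^2z^2)^{-(2\bi+g-1)}$ denominator; and the Chern character of the line bundle $\calL^2|_{L^\prime_i}$ coming from \eqref{determinant} contributes both the remaining $(1-v^2/z^2)^{-(2i-g+1)}$ factor and, via its exponential theta-class component, the $g$-th power of the sum in parentheses.

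Finally, since $M \mapsto M^{-2}K : \J_\bi \to \J_{2i}$ is étale of degree $2^{2g}$, $F^\prime_i$ is étale over $C_{2i}$ of the same degree, and the HRR integral on $F^\prime_i$ pushes forward to a top-degree integral on the symmetric product $C_{2i}$. Zagier's residue formula, in the form used in the TQFT computations of \cite{hausel-szenes}, then converts this cohomological integral into the one-variable contour integral around $|z^2 - v^2| = \epsilon$, with $z$ the formal variable tracking a Chern root of the tautological bundle on $C \times C_{2i}$. The main obstacle is the meticulous bookkeeping of $\T$-weights and tautological classes: the weight $-1$ on $M_{F^\prime_i}$, the weight $+1$ on $K_C$ in the Higgs complex, the weight $-j$ twist on $\Lambda_j$, and the weight $+1$ on the fibers of $E_i$ each contribute precise powers of $z$ and $v = t^{1/2}$ that must combine correctly to match \eqref{main}. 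Once the integrand on $F^\prime_i$ is correctly assembled in classical cohomology, the reduction to the residue is a mechanical application of Zagier's formula.
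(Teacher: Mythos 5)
Your route is the same as the paper's: identify $\chi_\T(\M(\GL_2);\calL_i\otimes\Lambda_{s^2})$ with an index on $F^\prime_i$ by integrating over the fibers of $E_i$, compute all equivariant Chern characters by Grothendieck--Riemann--Roch in Macdonald's presentation of $H^*(C_{2i})$, and finish with Zagier's residue formula using that $F^\prime_i\to C_{2i}$ is a $2^{2g}$-cover. However, there is a concrete error that would derail the computation: the $\T$-action on the fibers of $E_i$ has weight \emph{two}, not one. A point of $L^\prime_i$ is the class of $\bigl(M\oplus M^{-1},\Phi\bigr)$ with off-diagonal $\Phi=(\phi,\phi^\prime)$, taken modulo the automorphisms $\mathrm{diag}(a,a^{-1})$, which act by $(\phi,\phi^\prime)\mapsto(a^{-2}\phi,a^{2}\phi^\prime)$. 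The identification of $L^\prime_i$ with the total space of $E_i$ normalizes $\phi$ to be the canonical section of $\calO(\Delta_{2i})$, and after this normalization the scaling $\Phi\mapsto\lambda\Phi$ acts on the fiber coordinate $\phi^\prime$ by $\lambda^2$ (equivalently, the invariant $\phi\phi^\prime$ has weight $2$). The correct pushforward therefore carries $\Sym_{t^2}(E_i^*)$, which produces $(1-t^2\zeta)^{-(2\bi+g-1)}=(1-v^2z^2)^{-(2\bi+g-1)}$ after the substitution $\zeta=e^{x}=z^2/v^2$; your $\Sym^\bullet(tE_i^\vee)$ would instead give $(1-t\zeta)^{-(2\bi+g-1)}=(1-z^2)^{-(2\bi+g-1)}$, and \eqref{main} would not come out.

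Two further points in your bookkeeping are off or missing. First, the attributions: the factor $(1-v^2/z^2)^{-(2i-g+1)}$ comes from $\td(T_{C_{2i}})=\bigl(\eta/(1-\zeta^{-1})\bigr)^{2i-g+1}\exp(\cdots)$, not from $\calL^2$; one has $\ch_\T(\calL^2|_{L^\prime_i})=(t\zeta)^{2\bi}e^{\theta}$, which contributes the prefactor $z^{4\bi}$ and only one summand of the bracket, while the $g$-th power of the bracket is Zagier's $(1+xB(x))^g$ with $B$ aggregating the $\theta$-linear exponents of \emph{all} four factors ($\calL^2$, $\Lambda$, $\Sym$, and the Todd class). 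Second, you never explain why the contour is $|z^2-v^2|=\epsilon$, i.e.\ why it has components around both square roots $\pm v$ of $t$: since $\det\bE_i=M_{F^\prime_i}$ has no square root, $\Lambda_{s^2}|_{L^\prime_i}$ is obtained as the average $\tfrac12\bigl(\ch_\T(\Lambda_{s^\prime}(\bV_i))+\ch_\T(\Lambda_{-s^\prime}(\bV_i))\bigr)$ with $s^\prime=s\zeta^{-1/2}$, and it is precisely this averaging that adds the residue at $z=-v$ to the one at $z=+v$. These gaps are repairable, but as written your integrand does not reproduce the right-hand side of \eqref{main}.
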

\begin{remark}  In the right hand side of \eqref{main} $v$ is treated as a fixed complex number close to $1$ and $\epsilon$ is sufficiently small positive real number. In particular the contour $|z^2-v^2|=\epsilon$ will have two components around the two square roots $\pm v$ of $t$. Thus the result could also be computed algebraically as the sum of the residues of the differential form when $z$ equals the two square roots of $t$.  
\end{remark}

\begin{proof} As $L^\prime_i$ is the total space of the $\T$-equivariant vector bundle $E_i$, where the $\T$-action has weight two, we have that
\beq\label{integral}\chi_\T(L^\prime_i;\calL^\prime_i\otimes \Lambda_{s^2})=\chi_\T(F^\prime_i;\calL^2\otimes \Lambda_{s^2}\otimes \Sym_{t^2}(E_i^*) )=\int_{F^\prime_i} \ch(\calL^2)\ch(\Lambda_{s^2})\ch(\Sym_{t^2}(E_i^*))\td(T_{F^\prime_i}). \eeq   
Recall \cite[\S 5]{HT1} that we have $$c_1(\Delta_{2i})=2i\otimes \sigma + \sum_{l=1}^{2g} \xi_l\otimes e_l + \eta \otimes 1\in H^2(C_{2i}\times C), $$ where $\sigma$ is generator of $H^2(C)$ and $e_1, \dots, e_{2g}$ are canonical symplectic generators of 
$H^1(C)$. Thus $\eta \in H^2(C_{2i})$ and $\xi_1,
\dots, \xi_{2g} \in H^1(C_{2i})$.  A theorem of Macdonald \cite[(6.3)]{macdonald}
asserts that the cohomology ring $H^*(\rs_m)$ is generated by $\eta$
and the $\xi_j$.  It is convenient to introduce $$\theta_j = \xi_j
\xi_{j+g}$$ and $$\theta = \sum_{j=1}^g \theta_j \in H^2(\rs_{2i}).$$ We note the identity
\beq\label{identity}\left(\sum_{l=1}^{2g} \xi_l\otimes e_l\right)^2=-2\theta\sigma\eeq used below.

By Grothendieck-Riemann-Roch we find that \beq  \ch(E_i)&=&{\pr_{F^\prime_i}}_*\left(\ch(K_C^2 (-\Delta_{2i}))\td(C)\right) \nonumber \\ &=&{\pr_{F^\prime_i}}_*\left(\exp\left((4g-4-2i)\otimes \sigma - \sum_{l=1}^{2g} \xi_l\otimes e_l - \eta \otimes 1\right)(1+(1-g)\sigma)\right)
 \nonumber \\ &=&{\pr_{F^\prime_i}}_*\left(\left((4g-4-2i-\theta)\exp(-\eta)\otimes \sigma - \sum_{l=1}^{2g}\exp(-\eta) \xi_l\otimes e_l + \exp(-\eta) \otimes 1\right)(1+(1-g)\sigma)\right) \nonumber  \\ &=&{\pr_{F^\prime_i}}_*\left(((4g-4-2i-\theta+1-g)\exp(-\eta))\otimes \sigma + \sum_{l=1}^{2g} \exp(-\eta)\xi_l\otimes e_l + \exp(-\eta) \otimes 1\right)  \nonumber \\ &=& (3g-3-2i-\theta)\exp(-\eta) \label{chei} \\ &=&  \nonumber (2g-3-2i)\exp(-\eta)+\sum_{i=1}^g \exp(-\eta-\theta_i)
\eeq 	 	
Thus, introducing $\zeta=\exp(\eta)$, we have \beq\label{symchar }\ch(\Sym_{t^2}E_i^*)=\frac{1}{(1-t^2 \zeta)^{2\bi-1}\prod_{i=1}^g(1-t^2\zeta \exp(\theta_i))}=\frac{1}{(1-t^2 \zeta)^{2\bi+g-1}\exp\left(-\frac{t^2\zeta\theta}{1-t^2\zeta}\right)}\eeq
as \begin{multline*}\prod_{i=1}^g(1-t^2\zeta\exp(\theta_i))=\prod_{i=1}^g(1-t^2\zeta(1+\theta_i))=\prod_{i=1}^g(1-t^2\zeta-t^2\zeta\theta_i)=\\ =(1-t^2\zeta)^g\prod_{i=1}^g\left(1-\frac{t^2\zeta\theta_i}{1-t^2\zeta}\right)=(1-t^2\zeta)^g\prod_{i=1}^g \exp\left(-\frac{t^2\zeta\theta_i}{1-t^2\zeta}\right)=\\ =(1-t^2\zeta)^g\exp\left(-\frac{t^2\zeta\theta}{1-t^2\zeta}\right).\end{multline*}
Similar computation, using the formula for total Chern class \cite[(14.5)]{macdonald}, yields \beq\label{todd}\td(T_{C_{2i}})=\left({\eta\over 1-1/\zeta}\right)^{2i-g+1}\exp\left(
{\theta \over \zeta -1} -{\theta\over \eta} \right).\eeq

To compute the remaining Chern characters in \eqref{integral}. We recall that $$c_1(\calP_{\bi})=\bi\otimes \sigma + \sum_{l=1}^{2g} \tau_l\otimes e_l \in H^2(\Jac_{2\bi}\times C). $$ 
Comparing with \eqref{basic} we see that $$\ch(\calP^2_{2\bi})=\ch(\Delta_i)\ch(K_C^{-1})\ch(M^{-1}_{F^\prime_i}),$$
and the fact that $M_{F^\prime_i}$ is pulled back from $F^\prime_i$ implies that on $F^\prime_i$ we have $$\tau_l=\xi_l/2$$ and 
\beq\label{detlf}\ch(M_{F^\prime_i})=\zeta.\eeq We need now to compute the $\T$-equivariant Chern character of $\bE_i$. From \eqref{univerfi} and the fact that $\T$ acts on the second summand with weight minus one we get $$\ch_\T(\bE_i)=\ch(\calP_{\bi})+t\ch(\calP^{-1}_\bi)\ch(M_{F^\prime_i})=\exp\left(\bi\otimes \sigma + \sum_{l=1}^{2g} \xi_l/2\otimes e_l\right)+t\zeta\exp\left(-\bi\otimes \sigma - \sum_{l=1}^{2g} \xi_l/2\otimes e_l\right).$$ 
It follows from Grothendieck-Riemann-Roch that 
\begin{multline*}\ch_\T(\bV_i)=-\ch_\T(R {\pr_C}_*(\bE_i\stackrel{\bPhi_i}{\to}\bE_iK_C))= {\pr_C}_*\left(\ch_\T(\bE_i)(-1+t^{-1}\ch(K_C))\td(C)\right)\\={\pr_{C}}_*\left(\left(\exp\left(\bi\otimes \sigma + \sum_{l=1}^{2g} \xi_l/2\otimes e_l\right)+t\zeta\exp\left(-\bi\otimes \sigma - \sum_{l=1}^{2g} \xi_l/2\otimes e_l\right)\right) (-1+t^{-1}+(g-1)(1+t^{-1})\sigma))\right) \\ = {\pr_{C}}_*\left(\left(\left(1+(\bi-\theta/4) \otimes \sigma 
  \right) +t\zeta\left(1+(-\bi-\theta/4) \otimes \sigma 
   \right)\right) (-1+t^{-1}+(g-1)(1+t^{-1})\sigma)\right)\\ = \left((\bi-\theta/4)+t\zeta(-\bi-\theta/4)\right)(t^{-1}-1)+(1+t\zeta)(g-1)(t^{-1}+1)\\ = \bi(t^{-1}-1)(1-t\zeta)-\theta/4(t^{-1}-1)(1+t\zeta)+(g-1)(t^{-1}+1)(1+t\zeta)\\ = \bi(t^{-1}-1)(1-t\zeta)-e^{\theta_i/4}(t^{-1}-1)(1+t\zeta)+g(t^{-1}-1)(1+t\zeta)+(g-1)(t^{-1}+1)(1+t\zeta).\end{multline*}
Thus we have \begin{multline}\label{chlambdas}\ch_\T(\Lambda_{s}(\bV_i))=\left(\frac{(1+{s\over t})(1+st\zeta)}{(1+s)(1+s\zeta)}\right)^{\bi}\prod_{i=1}^g\frac{(1+s e^{\theta_i/4})(1+st\zeta e^{\theta_i/4})}{(1+{s\over t} e^{\theta_i/4})(1+s\zeta e^{\theta_i/4})}\left(\frac{(1+{s\over t})(1+s\zeta)}{(1+{s})(1+st\zeta)}\right)^{g} \\ \left((1+{s/ t})(1+s\zeta)(1+s)(1+st\zeta)\right)^{g-1}\\=
\frac{\exp\left(\frac{s{\theta/4}}{1+s}+\frac{st\zeta {\theta/4}}{1+st\zeta}\right)}{\exp\left(\frac{{s\over t}{\theta/4}}{1+{s\over t}}+\frac{s\zeta {\theta/4}}{1+s\zeta}\right)}
 \left((1+{s/t})(1+st\zeta)\right)^{g-1+\bi} \left((1+s)(1+s\zeta)\right)^{i}. \end{multline}

Let $$s^\prime:=\frac{s}{\zeta^{1/2}}.$$ Recalling the extra equivariant twist we endowed $\Lambda_i$ in the paragraphs after \eqref{restrict}, we have \beq\label{extchar} \ch_\T(\Lambda_{s^2})=(\ch_\T(\Lambda_{s^\prime}(\bV_i))+\ch_\T(\Lambda_{-s^\prime}(\bV_i)))/2 . \eeq 

Finally we compute \begin{multline*}\ch_\T(R\pr_{C*}(\bE_i)[1])=\\ ={\pr_C}_*\left(-\left(\exp\left(\bi\otimes \sigma + \sum_{l=1}^{2g} \xi_l/2\otimes e_l\right)+t\zeta\exp\left(-\bi\otimes \sigma - \sum_{l=1}^{2g} \xi_l/2\otimes e_l\right)\right)(1+(1-g)\sigma)\right)=\\ = {\pr_C}_*\left( -\left(1+(\bi-\theta/4) \otimes \sigma \right) -t\zeta\left(1+(-\bi-\theta/4) \otimes \sigma)\right)(1+(1-g)\sigma)\right) =\\ = \bi(t\zeta-1)+\theta/4(1+t\zeta)+(g-1)(t\zeta+1)=\bi(t\zeta-1)+\sum_{i=1}^ge^{\theta_i/4}(1+t\zeta)-(t\zeta+1)
\end{multline*}
thus $$\ch_\T(\det(R\pr_{C*}(\bE_i)[1]))=(t\zeta)^{\bi+g-1}e^{\theta/2}$$ and so by \eqref{determinant} and \eqref{detlf}
\beq\label{detfinal}\ch_\T(\calL|_ {L^\prime_i})=t^\bi\zeta^\bi e^{\theta/2}.\eeq



Recall the following integral formula of Zagier's  of \cite[(7.2)]{thaddeus}. For any power series
$A(x)\in\C[[x]]$ and $B(x)\in\C[[x]]$, 
$$\int_{C_n}A(\eta) \exp(B(\eta)\si) 
= \Res_{x=0} \left( \frac{A(x)(1+x B(x))^g}{x^{n+1}}dx
\right).$$ We also let $$z=e^{x-u\over 2}\in\C[[x,u]],$$ in particular $dx={2dz\over z}.$ Here $u$ is such that $$e^{-u}=t.$$ We also introduce $$v:=e^{-u\over 2}.$$

By recalling that $\pr_{C_{2i}}:F^\prime_i\to C_{2i}$ is a $2^{2g}$ cover we will now compute \begin{multline*} \int_{F^\prime_i} \ch_\T(\calL^\prime_i)\ch_\T(\Lambda_{s^\prime}(\bV_i)) \ch_\T(\Sym_{t^2}E_i^*) \td(T_{F^\prime_i}) =\\ =
\int_{F^\prime_i}(t\zeta)^{2\bi} \frac{\exp\left(\theta+\frac{  s^\prime{\theta/4}}{1+s^\prime}+\frac{s^\prime t\zeta {\theta/4}}{1+s^\prime t\zeta}+\frac{t^2\zeta\theta}{1-t^2\zeta}+{\theta \over \zeta -1}\right)}{\exp\left(\frac{{s^\prime\over t}{\theta/4}}{1+{s^\prime\over t}}+\frac{s^\prime \zeta {\theta/4}}{1+s^\prime \zeta}+{\theta\over \eta}\right)} \frac{ \left((1+{s^\prime\over t})(1+s^\prime t\zeta)\right)^{g-1+\bi} \left((1+s^\prime )(1+s^\prime \zeta)\right)^{i} {\eta}^{2i-g+1}}{(1-t^2 \zeta)^{2\bi+g-1}( 1-1/\zeta)^{2i-g+1}}  =\\ = 2^{2g}\Res_{x=0}z^{4\bi} \frac{ \left((1+ \frac{s}{zv})(1+svz)\right)^{g-1+\bi} \left((1+{sv\over z} )(1+{sz\over v})\right)^{i} {x}^{-g}}{(1-v^2z^2)^{2\bi+g-1}( 1-v^2/z^2)^{2i-g+1}} \\ \left(1+x\left(1+\frac{  {sv\over 4z}}{1+{sv \over z}}+\frac{svz {/4}}{1+svz}+\frac{v^2z^2}{1-v^2z^2}+{\frac{v^2}{z^2} \over 1 - \frac{v^2}{z^2}}-\frac{ {s\over 4zv} }{1+{s\over zv}}-\frac{{sz\over 4v}}{1+{sz\over v}}-{1\over x}\right)\right)^g dx =\\= \Res_{x=0}z^{4\bi} \frac{ \left((1+ \frac{s}{zv})(1+svz)\right)^{g-1+\bi} \left((1+{sv\over z} )(1+{sz\over v})\right)^{i} }{(1-v^2z^2)^{2\bi+g-1}( 1-v^2/z^2)^{2i-g+1}} \\ \left(4+\frac{  {sv\over z}}{1+{sv \over z}}+\frac{svz {}}{1+svz}+\frac{4v^2z^2}{1-v^2z^2}+{\frac{4v^2}{z^2} \over 1 - \frac{v^2}{z^2}}-\frac{ {s\over zv} }{1+{s\over zv}}-\frac{{sz\over v}}{1+{sz\over v}}\right)^g dx =\\= {1\over 2\pi i}\oint_{|z-v|=\epsilon}z^{4\bi} \frac{ \left((1+ \frac{s}{zv})(1+svz)\right)^{g-1+\bi} \left((1+{sv\over z} )(1+{sz\over v})\right)^{i} }{(1-v^2z^2)^{2\bi+g-1}( 1-v^2/z^2)^{2i-g+1}} \\ \left(4+\frac{  {sv\over z}}{1+{sv \over z}}+\frac{svz {}}{1+svz}+\frac{4v^2z^2}{1-v^2z^2}+{\frac{4v^2}{z^2} \over 1 - \frac{v^2}{z^2}}-\frac{ {s\over zv} }{1+{s\over zv}}-\frac{{sz\over v}}{1+{sz\over v}}\right)^g {2dz \over z}.
\end{multline*}

We notice that  $$\int_{F^\prime_i} \ch_\T(\calL_i)\ch_\T(\Lambda_{-s^\prime}(V_i)) \ch_\T(\Sym_{t^2}E_i^*) \td(T_{F^\prime_i})$$ will yield the same result with $z$ replaced by $-z$. 

Thus \eqref{extchar} implies the proposition. 
\end{proof}

\section{The symmetry for $i,j< g-1$}\label{symmetrysmoothi}
First we rewrite the RHS of \eqref{main} in a more manageable form. 

	Introduce the notations \beq \label{bethev} f(z,s,v)=\frac{ z^4 ( 1-v^2/z^2)^2(1+ \frac{s}{zv})(1+svz)}{(1-v^2z^2)^2(1+{sv\over z} )(1+{sz\over v})}\eeq
	and $$h(z,s,v)={z}{{\partial f\over \partial z}}\frac{(1+ \frac{s}{zv})(1+svz)(1+{sv\over z} )(1+{sz\over v})}{(1-{v^2 / z^2})(-v^2z^2+1)}.$$ 
	We can compute \beq \label{hessianz}z\frac{{\partial\over \partial z}f(z,s,v)}{f(z,s,v)}=\left(4+\frac{  {sv\over z}}{1+{sv \over z}}+\frac{svz {}}{1+svz}+\frac{4v^2z^2}{1-v^2z^2}+{\frac{4v^2}{z^2} \over 1 - \frac{v^2}{z^2}}-\frac{ {s\over zv} }{1+{s\over zv}}-\frac{{sz\over v}}{1+{sz\over v}}\right).\eeq
	With this notation, for $i<g-1$ we can rewrite the right-hand side of \eqref{main} as $${1\over 2\pi i}\oint_{|(z^2-v^2)(1+ \frac{s}{zv})(1+svz)|=\epsilon} {h^{g-1}\over f^i} {df \over f},$$ because the integrand has no pole at $z=-s/v$ or $z=-1/vs$. 
	Thus for $0\leq i<g-1$ and $0\leq j \leq g-1$ \eqref{main} gives \beq\label{compact}\chi_\T(\M(\GL_2);\calL_i\otimes\Lambda_j))={1\over (2\pi i)^2}\oint_{|s|=\epsilon}\oint_{|f|=\epsilon} {h^{g-1}\over f^i s^{2j}} {df \over f}{ds \over s}.\eeq

\begin{proposition}\label{symmetryformal} For any integers $i$ and $j$ we have 
	
$$	{1\over (2\pi i)^2}\oint_{|s|=\epsilon}\oint_{|f|=\epsilon} {h^{g-1}\over f^i s^{2j}} {df \over f}{ds \over s}={1\over (2\pi i)^2}\oint_{|s|=\epsilon}\oint_{|f|=\epsilon} {h^{g-1}\over f^j s^{2i}} {df \over f}{ds \over s}.$$
		\end{proposition}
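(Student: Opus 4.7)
The plan is to reduce the identity to an explicit involution on the $(z,s)$-plane. The double contour integral
\[
\frac{1}{(2\pi i)^2}\oint_{|s|=\epsilon}\oint_{|f|=\epsilon} \frac{h^{g-1}}{f^i s^{2j}} \frac{df}{f} \frac{ds}{s}
\]
extracts the coefficient of $f^i s^{2j}$ in $h^{g-1}$ viewed as a bivariate Laurent series. Since the form $\frac{df}{f} \wedge \frac{ds}{s}$ is manifestly invariant under the swap $(f, s^2) \leftrightarrow (s^2, f)$, the desired identity would follow from an algebraic involution $\sigma$ of the $(z,s)$-plane (depending on $v$) satisfying $\sigma^* f = s^2$, $\sigma^* (s^2) = f$, and $\sigma^* h = h$ up to a Jacobian-compensated factor.

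To identify $\sigma$, I would first catalogue the elementary symmetries of $f$ and $h$, which a short computation yields:
\begin{itemize}
\item $f(z, 1/s, v) = f(z, s, v)$ (palindromic in $s$), inducing $h(z, 1/s, v) = h(z, s, v)/s^4$;
\item $f(1/z, s, v) = 1/f(z, s, v)$, inducing $h(1/z, s, v) = h(z, s, v)/f^2$;
\item the swap identity
\[
f(z, s, v)\, f(s, z, v) = \left( \frac{(z^2-v^2)(s^2-v^2)(1+svz)}{(1-v^2z^2)(1-v^2s^2)(v+sz)} \right)^2.
\]
\end{itemize}
These together imply that $h^{g-1}/s^{2(g-1)}$ is invariant under $s \mapsto 1/s$, that $h^{g-1}/f^{g-1}$ is invariant under $z \mapsto 1/z$, and that the ratio $f(z, s, v)/f(s, z, v)$ is a perfect square of a rational function in $(z, s, v)$.

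The final step is to combine these three building blocks: the desired $\sigma$ should be constructed as the composition of the $(z,s)$-swap with Möbius corrections in $z$ and $s$ chosen so as to absorb the rational square root appearing in the swap identity, producing an involution that exchanges $f$ and $s^2$ while preserving $h$ up to the Jacobian. Under such a $\sigma$, the integrand for $(i,j)$ transforms into that for $(j,i)$, and the equality of the two contour integrals follows. The main obstacle lies precisely in this step: while the three identities above impose strong constraints on $\sigma$, the specific combination realizing it must be worked out carefully, and this appears to be what the abstract terms the \emph{mysterious functional equation}. An alternative route, should the explicit involution prove intractable, is to prove the symmetry formally coefficient-by-coefficient in a suitable expansion (say in powers of $v$), using the elementary identities above as recurrence constraints.
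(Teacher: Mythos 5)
Your overall strategy coincides with the paper's: both seek an explicit change of variables on the $(z,s)$-plane that exchanges $f$ with $s^2$ while preserving $h$ and the form $\frac{df\wedge ds}{fs}$, from which the symmetry of the double residue is immediate. Your three preliminary identities are all correct (indeed, writing $f=\frac{(z^2-v^2)^2(zv+s)(1+svz)}{(1-v^2z^2)^2(z+sv)(v+sz)}$ makes the swap identity transparent, since $(zv+s)(sv+z)$ cancels between $f(z,s,v)$ and $f(s,z,v)$). However, the proof stops exactly where the actual mathematical content begins. The entire substance of the paper's argument is the explicit map and the verification of its two functional equations: one sets
$$w=\left(\frac{(sz+v)(vz+s)}{(szv+1)(sv+z)}\right)^{1/2},\qquad \varphi(z,s)=(w,\,f(z,s,v)^{1/2}),$$
with the branches tied together by $wf^{1/2}=\frac{z^2(1-v^2/z^2)(vz+s)}{(1-v^2z^2)(sv+z)}$, and then checks that $f(w,f^{1/2},v)=s^2$ and $h(w,f^{1/2},v)=h(z,s,v)$; these checks are finite polynomial verifications because $f$ and $h$ depend only on $z^2$, $s^2$ and $zs$. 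You explicitly defer this step ("the specific combination realizing it must be worked out carefully"), so what you have is a correct plan together with necessary conditions on $\sigma$, not a proof. Note also that the eventual involution is not literally "the $(z,s)$-swap composed with M\"obius corrections in $z$ and $s$ separately": the new $z$-variable $w$ is the square root of a degree-two rational expression mixing $z$ and $s$, and the new $s$-variable is $f^{1/2}$, so the square roots and the choice of compatible branches are an essential part of the construction rather than a cosmetic correction.

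Your fallback suggestion (expand in powers of $v$ and match coefficients using the three identities as recurrences) is not a viable substitute as stated: the listed symmetries constrain $h^{g-1}$ but nowhere near determine its Laurent coefficients, and no mechanism is offered by which they would force the $(i,j)\leftrightarrow(j,i)$ symmetry order by order. If you want to complete the argument along your lines, the concrete task is to exhibit $\sigma$ explicitly and verify $\sigma^*f=s^2$ and $\sigma^*h=h$; once that is done, your treatment of the Jacobian (via $\varphi^*(df/f)=2\,ds/s$ and $\varphi^*(ds/s)=\frac12\,df/f$) and of the contours is fine.
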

		
	\begin{proof} 		To show that $h$ is "symmetric" in $f$ and $s^2$, we consider $$w={\left( (sz+v)(vz+s)\over (szv+1)(sv+z)\right)}^{1\over 2}.$$
		Then, using \eqref{sothat}, we find $$f(w,f(z,s,v)^{1\over 2},v)=s^2$$ and that $$h(w,f(z,s,v)^{1\over 2},v)=h(z,s,v).$$ One can check these functional equations by noting that $h$ and $f$  depend only on $z^2,s^2$ and $zs$, thus the substitutions  $z=w$ and $s=f^{1\over 2}$ can be done polynomially.  

Thus  consider $\varphi:\C^2\to \C^2$ by $$\varphi(z,s)=(w,f(z,s,v)^{1\over 2})$$ defined on a dense open subset of the source, by avoiding the poles of $f$ and $w$, and also choosing compatible branches of the two square roots, so that \beq\label{sothat}wf^{1\over 2}=\frac{z^2(1-v^2/z^2)(vz+s)}{(1-v^2z^2)(sv+z)}.\eeq Then we see that $$\varphi^*\left({df\over f}\right) = {ds^2\over s^2}={2ds\over s}$$ $$\varphi^*\left({ds\over s}\right) = {d f^{1\over 2} \over f^{1\over 2} }={1\over 2}{df \over f} $$ and so $$\varphi^*\left({{df \wedge ds }\over fs }\right)={{ds \wedge df }\over sf}. $$
		Thus we can compute $$\oint_{|s^2|=|f|=\epsilon} {h^{g-1}\over f^i s^{2j}} {{df \wedge ds }\over fs }=\oint_{|f|=|s^2|=\epsilon} \varphi^{*}\left({h^{g-1}\over f^i s^{2j}} {{df \wedge ds }\over fs } \right)=\oint_{|f|=|s^2|=\epsilon}{h^{g-1}\over s^i f^{2j}} {{ds \wedge df }\over sf }=\oint_{|s^2|=|f|=\epsilon}{h^{g-1}\over s^i f^{2j}} {{df \wedge ds }\over fs },$$
		proving the proposition. 
		
	\end{proof}

\begin{remark} We can observe that $$f(1/z,s,v)=f(z,s,v)^{-1}$$ and that $$\frac{h(1/z,s,v)}{f(1/z,s,v)}=\frac{h(z,s,v)}{f(z,s,v)}.$$
Consequently after the change of variables $z\mapsto 1/z$ we have 
\begin{multline} \oint_{|s^2|=|f|=\epsilon} {h^{g-1}\over f^i s^{2j}} {{df \wedge ds }\over fs } =\oint_{|s^2|=|1/f|=\epsilon} {h^{g-1}\over f^{2g-2-i} s^{2j}} {{-df \wedge ds }\over fs }=\oint_{|s^2|=|f|=\epsilon} {h^{g-1}\over f^{2g-2-i} s^{2j}} {{df \wedge ds }\over fs }, \label{symmetryfz}
\end{multline}
where we used the residue theorem and the fact that the poles of the integrand are located at the zeros of $f$ and $1/f$. 
Using Proposition~\ref{symmetryformal} this yields $$ \oint_{|s^2|=|f|=\epsilon} {h^{g-1}\over f^i s^{2j}} {{df \wedge ds }\over fs } =\oint_{|s^2|=|f|=\epsilon} {h^{g-1}\over f^{i} s^{2(2g-2-j)}} {{df \wedge ds }\over fs }, $$ which is expected in light of the role of $s$ in labelling
the various exterior powers of $\bV$ as in \eqref{lambdas}.
\end{remark}

\begin{remark} \label{vanishing} We can also notice that for $i<0$  \beq\label{zerof}\oint_{|s^2|=|f|=\epsilon} {h^{g-1}\over f^i s^{2j}} {{df \wedge ds }\over fs }=0,\eeq because the integrand has no pole at zeroes of $f$. From Proposition~\ref{symmetryformal} and \eqref{symmetryfz} we get that for $j<0$ and $j>2g-2$  \beq\label{zeros}\oint_{|s^2|=|f|=\epsilon} {h^{g-1}\over f^i s^{2j}} {{df \wedge ds }\over fs }=0.\eeq For $0\leq i < g-1$ this follows from the fact that $\bV$ is a vector bundle of rank $4g-4$ on $\M(\GL_2)^s$. We will also give an interpretation of  this for $i=g-1$ below in Remark~\ref{cohomologicalvb}. 
\end{remark}

Finally by combining the previous Proposition~\ref{symmetryformal} with \eqref{compact} we have now the following result, which is identical to our Theorem~\ref{mainth} from the Introduction.

\begin{corollary}\label{symmetrysmooth} For $0\leq i,j <g-1$ we have $$\chi_\T(\M(\GL_2);\calL_i\otimes\Lambda_j)=\chi_\T(\M(\GL_2);\calL_j\otimes\Lambda_i)$$
\end{corollary}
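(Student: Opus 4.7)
The plan is simply to combine the integral formula of Proposition~\ref{smoothformula} with the formal symmetry of Proposition~\ref{symmetryformal}.

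First I would apply Proposition~\ref{smoothformula}, whose right-hand side expresses the generating series $\chi_\T(\M(\GL_2); \calL_i \otimes \Lambda_{s^2})$ as a contour integral in $z$. Rewriting the integrand via the notations \eqref{bethev} and the logarithmic derivative identity \eqref{hessianz} turns it into $h^{g-1}/f^i \cdot df/f$, and extracting the coefficient of $s^{2j}$ by a further contour integral $\frac{1}{2\pi i}\oint_{|s|=\epsilon} \frac{ds}{s^{2j+1}}$ yields the compact double-integral formula \eqref{compact}:
$$\chi_\T(\M(\GL_2);\calL_i\otimes\Lambda_j) = \frac{1}{(2\pi i)^2} \oint_{|s|=\epsilon}\oint_{|f|=\epsilon} \frac{h^{g-1}}{f^i s^{2j}} \frac{df}{f}\frac{ds}{s}.$$
The same identity with $i$ and $j$ swapped computes $\chi_\T(\M(\GL_2);\calL_j\otimes\Lambda_i)$.

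Second, I would invoke Proposition~\ref{symmetryformal}, which asserts precisely that the double integral on the right is invariant under $(i,j) \leftrightarrow (j,i)$. Chaining the two equalities gives the Corollary.

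There is no remaining obstacle: the substantive content has been absorbed into the two preceding propositions. The hard step behind the scenes is the involution $\varphi(z,s) = (w, f(z,s,v)^{1/2})$ employed in Proposition~\ref{symmetryformal}, which interchanges $f$ with $s^2$ while preserving $h$ and the measure $df \wedge ds/(fs)$. The hypothesis $0 \leq i,j < g-1$ is used only to guarantee that Proposition~\ref{smoothformula} is simultaneously applicable to both sides; the boundary case $i=g-1$ or $j=g-1$ is not covered here and is handled by the separate global argument of Section~\ref{symmetryg-1} leading to Corollary~\ref{mainc}.
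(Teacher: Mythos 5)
Your proposal is correct and follows the paper's own route exactly: the paper derives the compact double-contour formula \eqref{compact} from Proposition~\ref{smoothformula} via the rewriting in terms of $f$ and $h$, and then cites Proposition~\ref{symmetryformal} to swap $i$ and $j$. Nothing is missing.
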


\section{Computation over the moduli stack}\label{global}

In this section, we will continue the study of the equivariant indices of $\Lambda_j$, but now over the {\it moduli stack} of Higgs bundles. This means that we will also incorporate the contribution of unstable Hitchin pairs. The benefit is that we can now apply the very powerful techniques developed by Teleman and Woodward in \cite{TW}.  

As this approach has the advantage of being easily generalizable to other Lie groups, we will first take $\G$ to be a general connected finite-dimensional reductive group over $\C$, and will only specify it to be $\SL(2,\C)$ or $\GL(2,\C)$ at later stage. We assume $\pi_1(\G)$ has no torsion. Let $\G_0$ be a real form of $\G$, whose fundamental group again is assumed to be free. We consider the moduli space of $\G_0$-Higgs bundles, following \cite{GGM}. The Lie algebra of $\G_0$ is denoted as $\frak{g}_0$ and satisfies $\frak{g}_0\otimes_\R \C \simeq \frak{g}$. It admits a Cartan decomposition
$$
\frak{g}_0=\fh \oplus \fm.
$$
Here $\fh$ is the Lie algebra of the maximal compact subgroup $\H\subset \G_0$, and as the adjoint action of $\H$ preserves the Cartan decomposition, $\fm$ will carry an $\H$-representation. We denote the set of weights as $\frak{R}(\fm)$. Over the curve $C$, a $\G_0$-Higgs bundle is a pair $(E,\varphi)$, where $E$ is a holomorphic principal $\H^\C$-bundle over $C$ and 
$$
\varphi\in H^0(C,E(\fm^\C) \otimes K_C),
$$
where $E(\fm^\C):=E\times_{\H^\C} \fm^\C$ is the $\fm^\C$-bundle associated to $E$.

One can define the moduli space $\M(\G_0)$ of semi-stable $\G_0$-Higgs bundles, over which there is again a $\T$-action acting by scaling the Higgs field, and one can consider the equivariant index of various vector bundles over it. To apply the result of \cite{TW}, we will first work with the moduli stack of $\G_0$-Higgs bundles, defined as follows. Denote $E^*(\fh^\C)$ the vector bundle associated to the universal $\H^\C$-bundle over $C \times \mathrm{Bun}_{\H^\C}$, and $\pi$ the projection $C \times \mathrm{Bun}_{\H^\C} \ra \mathrm{Bun}_{\H^\C}$. Then the moduli stack of $\G_0$-Higgs bundles is 
$$
\frak{M}{(\G_0)}=\mathrm{Spec}\,\mathrm{Sym}\left({\mathbf R}\pi_*(E^*(\fh^\C))[1]
\right).
$$ 

Over $\frak{M}{(\G_0)}$, we say a line bundle $L$ has level $k\in H^4(B\H^\C,\Q)$ if $c_1(L)$ is the image of $k$ under the injective transgression homomorphism $H^4(B\H^\C,\Q) \ra H^2({\mathrm{Bun}_{\H^\C}},\Q)$. $k$ defines a quadratic form on $\fh$, and, following \cite{TW}, we call a line bundle {\it admissible} if $k>-h$ as a quadratic from with $h:=-\frac{1}{2}\mathrm{Tr}_{\fh}$. For a line bundle $L$ of level $k$, we will usually refer to it as $\frak{L}^k$, with slightly abusive use of notation, to make the level explicit. When $H^4(B\H^\C,\Q)=\Z$, $k$ is an integer, and $\frak{L}^{\otimes k}$ will be in the same K-theory class of $L$ if we take $\frak{L}$ to be the determinant line bundle. $\T$ also acts on $\frak{M}{(\G_0)}$ by scaling the Higgs field, and we will consider admissible line bundles that are equivariant, whose equivariant index is what we are after. But before presenting the formula for this index, we first need to introduce some notations. 

Consider the function on the Cartan subalgebra $\ft\subset \fh^\C$
$$
D: \quad \xi \mapsto \frac{k+h}{2}(\xi,\xi)-\Tr_{\frak{h}}\left(\mathrm{Li}_2(te^\xi)\right),
$$
and the map from $T$, the Cartan of $\H^\C$, to $T^\vee$
$$
\chi_t=e^{(k+h)(\cdot,\cdot)}\prod_{\mu\in \frak{R}(\fm)} \left(1-te^\mu\right)^\mu.
$$
We let $F_t$ be the preimage of $e^{2\pi i \rho}$ in the regular part of $T$. It is obvious that $F_t$ is invariant under the action of the Weyl group $W_\H$ of $\H$. Combined with the Killing form, $D$ defines an endomorphism on $\ft$ which we denote as $H^\dagger_t$, and finally we define
$$
\theta_t:=\frac{\prod_{\alpha}(1-e^{\alpha})\prod_{\mu\in \frak{R}(\fm)}(1-te^\mu)}{\det H^\dagger_t},
$$
where $\alpha$ in the first product in the numerator runs over all roots of $\fh$. Then we have the following theorem.

\begin{theorem}[Equivariant Verlinde formula for real groups]
The equivariant index of $\frak{L}^k$ over $\frak{M}{(\G_0)}$ is given by
\begin{equation}\label{IndForm}
\chi_{\T}(\frak{M}(\G_0),\frak{L}^k)=\sum_{f\in F_{t}/W_\H}\theta_t(f)^{1-g}.
\end{equation}
\end{theorem}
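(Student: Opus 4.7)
The plan is to adapt the non-abelian localization machinery of Teleman-Woodward \cite{TW} from complex reductive groups (without a $\T$-action) to the real-form $\T$-equivariant Higgs setting. I would proceed in four steps: (i) reduce the index over $\frak{M}(\G_0)$ to one over the pure bundle stack $\mathrm{Bun}_{\H^\C}$; (ii) apply the Teleman-Woodward residue/localization formula; (iii) identify the resulting critical-point set with $F_t/W_\H$; (iv) verify that each critical point contributes $\theta_t(f)^{1-g}$.

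First I would use the affine projection $p:\frak{M}(\G_0)\to\mathrm{Bun}_{\H^\C}$ that forgets the Higgs field. Its derived fibers are $H^\bullet(C,E(\fm^\C)\otimes K_C)$, and $\T$ acts on them by scaling. Since $\frak{L}^k$ is pulled back from the bundle stack, pushforward yields
\[
\chi_{\T}(\frak{M}(\G_0),\frak{L}^k) \;=\; \chi\!\left(\mathrm{Bun}_{\H^\C},\;\frak{L}^k\otimes\Sym^\bullet_{t}\,\mathbf R\pi_*(E(\fm^\C)\otimes K_C)^\vee\right),
\]
which is exactly the class of indices --- admissible line bundles twisted by a universal character --- treated in \cite{TW}. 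Their machinery pulls the calculation back along $\mathrm{Bun}_T\to\mathrm{Bun}_{\H^\C}$, applies a Frenkel-Teleman style residue, and extracts a finite sum over simple critical points of a potential $\widetilde D:\ft\to\C$.

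The third step is to identify $\widetilde D$ with the $D$ of the statement. The admissible line bundle contributes the quadratic $\tfrac{k+h}{2}(\xi,\xi)$, with the $h$-shift arising from the canonical bundle of $\mathrm{Bun}_{\H^\C}$ having level $-h$; each weight $\mu\in\frak R(\fm)$ contributes $-\mathrm{Li}_2(te^\mu)$ via expanding $\log(1-te^\mu) = -\sum_{n\ge 1}(te^\mu)^n/n$ inside the symmetric algebra factor. The critical equation $\nabla D\equiv 2\pi i\rho$ modulo the coroot lattice then exponentiates to $\chi_t(f)=e^{2\pi i\rho}$, identifying the critical set with $F_t$, and the Weyl quotient $F_t/W_\H$ records the symmetry of the Cartan pullback. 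At each $f$ the stationary-phase contribution assembles the Hessian $\det H^\dagger_t$ in the denominator and the $\T$-equivariant Euler classes $\prod_\alpha(1-e^\alpha)$ (from the gauge directions) and $\prod_{\mu\in\frak R(\fm)}(1-te^\mu)$ (from the Higgs directions) in the numerator. The uniform exponent $1-g$ appears because every character-line on the curve contributes via $\chi(C,\cdot)=1-g+\deg$, with the degree terms absorbed into $D$; this gives exactly $\theta_t(f)^{1-g}$.

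The main obstacle is making the above adaptation rigorous. Admissibility $k>-h$ must be invoked to ensure $D$ is sufficiently convex that $F_t$ is finite and the residue formula converges; the $\T$-equivariant structure must be carefully tracked through relative Serre duality in Step~1; and the original \cite{TW} argument, formulated without the Higgs twist, must be extended to accommodate the symmetric algebra factor $\Sym^\bullet_t$, which is only formal rather than polynomial in $t$. Finally, the torsion-freeness of $\pi_1(\G)$ and $\pi_1(\G_0)$ should be used to rule out extra topological contributions to the sum over $F_t$.
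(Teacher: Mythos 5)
Your proposal follows essentially the same route as the paper: push forward along $\frak{M}(\G_0)\to\mathrm{Bun}_{\H^\C}$ to trade the Higgs directions for a symmetric-algebra (equivalently, via $\mathrm{Sym}_t(V[1])=\Lambda_{-t}V$ and Serre duality, an exterior-algebra) twist, and then apply Theorem 2.15 of \cite{TW} to identify the critical set $F_t/W_\H$ and the per-point contribution $\theta_t(f)^{1-g}$. The paper simply cites the proof of Theorem 7 of \cite{AGP} (with the adjoint representation replaced by $\fm$) for the details you spell out in your steps (iii) and (iv), so your write-up is a correct, somewhat more explicit version of the same argument.
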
 

\begin{proof}
From the map $\frak{M}(\G_0)\ra \mathrm{Bun}_{\H^\C}$, we have
$$
\chi_{\T}(\frak{M}(\G_0),\frak{L}^k)=\chi\left(\mathrm{Bun}_{\H^\C},\frak{L}^k\otimes \Lambda_{-t}{R}\pi_*(E^*(\fh^\C))\right).
$$
Here $\chi$ on the right-hand side denotes the usual (non-equivariant) index of a K-theory class over $\mathrm{Bun}_{\H^\C}$. Also, we have abusively used $\frak{L}^k$ to denote its restriction to $\mathrm{Bun}_{\H^\C}$, and used the identity $\mathrm{Sym}_t\left(V[1]\right)=\Lambda_{-t} V$ for arbitrary class K-theory $V$. Then the right-hand side of the equation can be handled by Theorem 2.15 of \cite{TW}. The computation is a straightforward modification of that in the proof of Theorem 7 in \cite{AGP}, with the adjoint representation now replaced with $\fm$.
\end{proof}

\begin{remark} In fact, the theorem holds for more general non-compact subgroups $\G_0\subset \G$ that are not real forms of $\G$. For example, it holds when $\G_0=\G$ is the entire group, for which the index formula just becomes the equivariant Verlinde formula for $\G$ studied in \cite{GP,GPYY,AGP,HL}. 

\end{remark}

\begin{example}[$\G_0=\SL(2,\R)$] In this case $\H=\U(1)$, and $\fm^\C$ is the two-dimensional representation that decomposes as $\C_{+2}\oplus \C_{-2}$, where the subscripts denote the weights of the $\U(1)$-action. We parametrize $\H$ by an angle $\vartheta$ in $[0,2\pi)$. Then $F_t$ is the set of solutions to the equation
$$
e^{2k i\vartheta}\left(\frac{1-te^{-2i\vartheta}}{1-te^{2i\vartheta}}\right)^2=1,
$$
and
$$
\theta_t=\frac{(1-te^{2i\vartheta})(1-te^{-2i\vartheta})}{\det H^\dagger_t},
$$
where $H^\dagger_t$ is the Hessian of 
$$
D: \quad \vartheta \mapsto - k\vartheta^2-\left(\mathrm{Li}_2(te^{2i\vartheta})+\mathrm{Li}_2(te^{-2i\vartheta})\right).
$$

\end{example}

\begin{example}[$\G_0=\U(1,1)$]

This is very similar to the previous case. We now have $\H=\U(1)\times \U(1)$ and $\fm^\C=\C_{1,-1}\oplus\C_{-1,1}$, where subscripts again denote weights under $\U(1)\times \U(1)$. We can identify $\H$ with diagonal $2\times 2$ matrices $\mathrm{diag}\{e^{i\vartheta_1},e^{i\vt_2}\}$, and it is convenient to consider the combination $\vt=(\vt_1-\vt_2)/2$, $\vt'=(\vt_1+\vt_2)/2$, both in $[0,2\pi)$. The level is given by two integers $k$ and $k'$, corresponding to respectively the non-abelian and abelian part of $\U(1,1)$. Now, $F_t$ is given by the solutions to the following set of equations
\begin{align*}
e^{2k i\vartheta}\left(\frac{1-te^{-2i\vartheta}}{1-te^{2i\vartheta}}\right)^2&=1,\\
e^{2k'i\vt'}&=1,
\end{align*}
and
$$
\theta_t=\frac{(1-te^{2i\vt})(1-te^{-2i2\vt})}{\det H^\dagger_t},
$$
where $H^\dagger_t$ is the Hessian of
$$
D: \quad \vartheta \mapsto -k\vartheta^2-k'\vt'^2 -\left(\mathrm{Li}_2(te^{2i\vt})+\mathrm{Li}_2(te^{-2i\vartheta})\right).
$$
It is easy to related the $\U(1,1)$ case to previous case of $\G_0=\SL(2,\R)$ as the equation for $\vt$ are the same, while the new variable $\vt'$ takes value in $\{0,\frac{\pi}{k},\frac{2\pi}{k},\ldots,\frac{(2k-1)\pi}{k}\}$. Using 
$$
\det H^{\dagger,\U(1,1)}_t=\det H^{\dagger,\SL(2,\R)}_t\cdot k',
$$
we have
$$
\chi_{\T}\left(\frak{M}_{\U(1,1)},\frak{L}^k\right)=k'^g\cdot\chi_{\T}\left(\frak{M}_{\SL(2,\R)},\frak{L}^k\right).
$$
\end{example}

In our study of mirror symmetry between moduli spaces of Higgs bundles, it is also useful to consider indices of exterior powers of $\bV$. On the moduli space, $\bV$ can only be defined locally, but it exists globally on the stack $\frak{M}(\SL(2,\C))$ as a hypercohomology complex $\mathbf{R}\pr_{C*}(\bE\stackrel{\bPhi}{\to}\bE\otimes K_C)[1]$, which we will continue call $\bV$. 

There are several moduli stacks we would like to consider, with natural morphisms between them:
$$
\begin{matrix}
\mathrm{Bun}_{\C^\times} & \rightarrow & \mathrm{Bun}_{\SL(2,\C)} \\
 \mathrel{\rotatebox[origin=c]{-90}{$\hookrightarrow$}} & &  \mathrel{\rotatebox[origin=c]{-90}{$\hookrightarrow$}} \\
\frak{M}(\SU(1,1)) & \rightarrow & \frak{M}(\SL(2,\C)) .
\end{matrix}
$$
The vertical maps are injective, while the horizontal ones are not. The image of the bottom map is a substack $\frak{M}'(\SU(1,1))=\frak{M}(\SU(1,1))/W$ where $W=\Z/2\Z$ is the Weyl group of $\SL(2,\C)$. The complex $\bV$ can be restricted to $\frak{M}'(\SU(1,1))$ as well as be pulled back to every corner of the above diagram. To avoid clutter, we will continue denoting the resulting sheaves on $\frak{M}'(\SU(1,1))$ and $\frak{M}(\SU(1,1))$ as $\bV$, while we will denote the pullback to $\mathrm{Bun}_{\C^\times}$ and $\mathrm{Bun}_{\SL(2,\C)}$ in a different font as $\frak{V}$. On the other hand, we will use $\frak{L}^k$ for the $k$-th tensor power of the determinant line bundle that also exists on all the five stacks mentioned above. 

To study the index of $\frak{V}$ and its exterior powers, it is usual to have the following proposition.
\begin{proposition} In the rational equivariant K-theory of $\mathrm{Bun}_{\SL(2,\C)}$, we have 
$$
\mathfrak{V}=(v^{-1}+v)(g-1)\bE_{x}+(v^{-1}-v)\bE_{C},
$$
where $\bE_{x}$ and $\bE_{C}$ are obtained from the rank $2$ universal bundle $\bE$ over $\mathrm{Bun}_{\SL(2,\C)} \times C$ by, respectively, restricting to a point $x\in C$ and slanting with the fundamental class of $C$. 
\end{proposition}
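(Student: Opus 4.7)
The plan is to verify the identity at the level of rational Chern characters via Grothendieck--Riemann--Roch applied to $\pi\colon C\times \mathrm{Bun}_{\SL(2,\C)}\to \mathrm{Bun}_{\SL(2,\C)}$, invoking the fact that $\ch\otimes\Q$ is an isomorphism from rational equivariant $K$-theory onto rational equivariant cohomology. First I would unpack the definition $\mathfrak{V}=\mathbf{R}\pi_*(\bE\xrightarrow{\bPhi}\bE\otimes K_C)[1]$: shifting by $[1]$ negates the class in $K$-theory, and fixing the $\T$-equivariant normalisation so that the two terms of the Higgs complex carry weights $v$ and $v^{-1}$ respectively (so the Higgs field $\bPhi$ has weight $v^{-2}$) yields $[\mathfrak{V}]=-v\,[\mathbf{R}\pi_*\bE]+v^{-1}[\mathbf{R}\pi_*(\bE\otimes K_C)]$ in rational equivariant $K$-theory.

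Next I would reduce each $\mathbf{R}\pi_*$ to the two building blocks $\bE_x$ and $\bE_C$. Writing $[x]\in\H^2(C)$ for the class of a point and decomposing by K\"unneth, $\ch(\bE)=1\otimes\ch(\bE_x)+[x]\otimes\ch(\bE_C)+\eta$, where $\eta\in\H^1(C)\otimes\H^*(\mathrm{Bun}_{\SL(2,\C)})$ is annihilated by $\pi_*$ (since only the $\H^2(C)$-component survives integration over the fibre). Combining this with $\td(T_{C/\mathrm{Bun}})=1+(1-g)[x]$ and $\ch(K_C)=1+(2g-2)[x]$, Grothendieck--Riemann--Roch then gives $\ch(\mathbf{R}\pi_*\bE)=(1-g)\ch(\bE_x)+\ch(\bE_C)$ and, after observing that the correction from $K_C$ is $(2g-2)\ch(\bE_x)[x]$, $\ch(\mathbf{R}\pi_*(\bE\otimes K_C))=(g-1)\ch(\bE_x)+\ch(\bE_C)$.

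Substituting these two formulas into $[\mathfrak{V}]=-v\,[\mathbf{R}\pi_*\bE]+v^{-1}[\mathbf{R}\pi_*(\bE\otimes K_C)]$ and collecting terms produces $(g-1)(v+v^{-1})[\bE_x]+(v^{-1}-v)[\bE_C]$, which is the desired identity. The only subtle point I expect is pinning down the $\T$-equivariant normalisation of the two terms of the Higgs complex consistently with the convention implicit in the proposition (an overall $v$ versus $v^{-1}$ twist), since the elementary versus symmetric choice of weights $(1,v^{-2})$ versus $(v,v^{-1})$ differs by an overall factor of $v$; everything else is a direct termwise GRR computation amounting to bookkeeping.
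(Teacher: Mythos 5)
Your proposal is correct and follows essentially the same route as the paper: both reduce to $\mathfrak{V}=v^{-1}R\pi_*(\bE\otimes K_C)-vR\pi_*(\bE)$ and then use the identities $R\pi_*(\bE)=(1-g)\bE_x+\bE_C$ and $R\pi_*(\bE\otimes K_C)=(g-1)\bE_x+\bE_C$, which the paper simply asserts and you justify by Grothendieck--Riemann--Roch and the K\"unneth decomposition. Your remark about the overall $v$-twist in the equivariant normalisation correctly identifies the one convention one must fix to match the stated formula.
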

\begin{proof} As $\frak{V}=v^{-1}{R}\pi_*(\bE\otimes K_C)-v{R}\pi_*(\bE)$, the proposition directly follows from two identities in the K-theory of $\mathrm{Bun}_{\SL(2,\C)}$,
$$
{R}\pi_*(\bE)=(1-g)\bE_{x}+\bE_{C},
$$
and
$$
{R}\pi_*(\bE\otimes K_C)=(g-1)\bE_{x}+\bE_{C}.
$$
\end{proof}

Now we consider the index of $\Lambda_s\bV$ on the moduli stack of $\SU(1,1)$-Higgs bundles $\frak{M}(\SU(1,1))$. As the maximal compact subgroup of $\SU(1,1)$ is $\U(1)$, we use the morphism 
$$
p: \frak{M}(\SU(1,1))\quad\ra\quad\mathrm{Bun}_{\C^\times}=\mathrm{Pic}(C)\times B\C^\times
$$
to reduce the index computation over $\frak{M}(\SU(1,1))$ to an index computation over $\mathrm{Bun}_{\C^\times}$. To give the index formula explicitly, we parametrize the complexified maximal compact subgroup $\H^\C=\C^\times$ by $z$, and denote the set of solutions of
\beq \label{thetabethe}
z^{2k}\left(\frac{1-v^2z^{-2}}{1-v^2z^2}\right)^2\left(\frac{1+sv^{-1}z^{-1}}{1+sv^{-1}z}\right)\left(\frac{1+svz}{1+svz^{-1}}\right)=1,
\eeq
as $F'_{t,s}$. Define
\beq \label{littletheta}
\theta'_{t,s}:=\frac{1}{\det H'^\dagger_{t,s}}\cdot \frac{(1-v^2z^2)(1-v^2z^{-2})}{(1+sv^{-1}z)(1+sv^{-1}z^{-1})(1+svz)(1+svz^{-1})},
\eeq
where $\det H'^\dagger_{t,s}$ is given by
\beq\label{Hessian}
\left[\left(2k + \frac{4v^2z^2}{1-v^2z^2}+\frac{4v^2z^{-2}}{1-v^2z^{-2}}-\frac{sv^{-1}z}{1+sv^{-1}z}-\frac{sv^{-1}z^{-1}}{1+sv^{-1}z^{-1}}+\frac{svz}{1+svz}+\frac{svz^{-1}}{1+svz^{-1}}\right) \right].
\eeq

Then we have
\begin{theorem}\label{IndSU11}
For $\frak{L}^k$ a line bundle on $\mathfrak{M}(\SU(1,1))$ with level $k\geq 0$, the $\T$-equivariant index of $\frak{L}^k\otimes \Lambda_s\bV$ is given by
\beq\label{IndForm3} 
\chi_\T\left(\mathfrak{M}(\SU(1,1)),\frak{L}^k\otimes \Lambda_s \bV \right)=\sum_{f\in F'_{t,s}}\theta'_{t,s}(f)^{1-g}.
\eeq
\end{theorem}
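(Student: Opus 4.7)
The plan is to reduce the index computation to one over $\mathrm{Bun}_{\C^\times}$ and then invoke Theorem~2.15 of \cite{TW}, paralleling the proof of \eqref{IndForm}. The morphism $p:\mathfrak{M}(\SU(1,1))\to\mathrm{Bun}_{\C^\times}$ satisfies
$$
p_*\O_{\mathfrak{M}(\SU(1,1))}=\Lambda_{-t}\mathbf{R}\pi_*E^*(\fm^\C)
$$
by the identity $\mathrm{Sym}_t(V[1])=\Lambda_{-t}V$, and both $\mathfrak{L}^k$ and $\bV$ are pulled back from stacks sitting over $\mathrm{Bun}_{\C^\times}$, giving
$$
\chi_\T\bigl(\mathfrak{M}(\SU(1,1)),\mathfrak{L}^k\otimes\Lambda_s\bV\bigr)=\chi\bigl(\mathrm{Bun}_{\C^\times},\mathfrak{L}^k\otimes \Lambda_s\mathfrak{V}\otimes \Lambda_{-t}\mathbf{R}\pi_*E^*(\fm^\C)\bigr).
$$
Under the maximal compact $\H=\U(1)\subset\SU(1,1)$ we have $\fm^\C=\C_{+2}\oplus\C_{-2}$, while the pulled-back universal bundle is $\bE=\mathbf{L}\oplus\mathbf{L}^{-1}$ for $\mathbf{L}$ the universal line bundle on $\mathrm{Bun}_{\C^\times}\times C$. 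The proposition proved just above then expresses $\mathfrak{V}$, hence $\Lambda_s\mathfrak{V}$, as an explicit K-theoretic expression in the restriction $\bE_x=\mathbf{L}|_x+\mathbf{L}^{-1}|_x$ and the slant $\bE_C$.

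The right-hand side is then computed by applying \cite[Thm~2.15]{TW} exactly as in the proof of \eqref{IndForm}, but with the additional insertion $\Lambda_s\mathfrak{V}$. Parametrizing the Cartan $\C^\times$ of $\H^\C$ by $z$, the effective potential acquires, beyond the terms $k(\log z)^2-\mathrm{Li}_2(v^2z^2)-\mathrm{Li}_2(v^2z^{-2})$ of \eqref{IndForm}, a logarithmic contribution
$$
-\log\bigl((1+svz)(1+svz^{-1})(1+sv^{-1}z)(1+sv^{-1}z^{-1})\bigr)
$$
coming from the $\bE_C$-piece of $\Lambda_s\mathfrak{V}$. Its $z\partial_z$-critical equation reproduces the Bethe equation \eqref{thetabethe}: $z^{2k}$ from $\mathfrak{L}^k$, the squared ratio $(1-v^2z^{-2})^2/(1-v^2z^2)^2$ from the two weight-$\pm 2$ summands of $\fm^\C$ (using $v^2=t$), and the two $s$-dependent ratios from $\Lambda_s\mathfrak{V}$. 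Its second derivative at a critical point gives the Hessian \eqref{Hessian}, and combining the residue factors $(1-te^\mu)_{\mu\in\frak{R}(\fm)}$ of \cite[Thm~2.15]{TW} with the $\bE_x$-piece of $\Lambda_s\mathfrak{V}$ yields the quotient $\theta'_{t,s}$ of \eqref{littletheta}. The exponent $1-g$ arises from Riemann--Roch on $\mathrm{Pic}(C)$, and no Weyl-group quotient appears since $W_\H$ is trivial for $\H=\U(1)$.

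The main obstacle is bookkeeping: each of the four $s$-dependent factors in \eqref{thetabethe} must be traced to the correct weight of $\mathfrak{V}$, and one must verify that the ``fixed'' ($\bE_x$) contributions of $\Lambda_s\mathfrak{V}$ enter $\theta'_{t,s}$ multiplicatively, while the ``slanted'' ($\bE_C$) contributions enter as logarithmic derivatives both in the Bethe equation and in the Hessian. Once this separation is made, the sum over $F'_{t,s}$ in \eqref{IndForm3} falls out directly, the computation being a straightforward modification of that in the proof of \eqref{IndForm}, with the universal bundle insertion $\Lambda_s\mathfrak{V}$ added to the $\Lambda_{-t}$ of the $\fm^\C$-pushforward.
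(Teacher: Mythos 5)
Your overall strategy is the same as the paper's: push the computation down to $\mathrm{Bun}_{\C^\times}$ via $\mathrm{Sym}_t(V[1])=\Lambda_{-t}V$, feed the result to Theorem~2.15 of \cite{TW}, and let the $\bE_x$-part of $\Lambda_s\mathfrak{V}$ enter $\theta'_{t,s}$ multiplicatively while the $\bE_C$-part deforms both the Bethe equations and the Hessian. However, the one genuinely nontrivial step --- converting $\Lambda_s\mathfrak{V}$ into a form Theorem~2.15 of \cite{TW} can accept --- is only asserted, not performed: one needs the identity $\Lambda_s=\exp\bigl[-\sum_{p>0}(-s)^p\psi^p/p\bigr]$ together with $\psi^pE_C(R)=E_C(\psi^pR)/p$ to arrive at \eqref{pLambdaS}, i.e.\ an $E_x$-factor times an exponential of $E_C$-classes; without this the separation into ``multiplicative'' and ``potential'' contributions that your argument relies on has no justification.

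More seriously, the $\bE_C$-contribution to the potential is not $-\log\bigl((1+svz)(1+svz^{-1})(1+sv^{-1}z)(1+sv^{-1}z^{-1})\bigr)$: it is the dilogarithm combination $\mathrm{Li}_2(-svz)+\mathrm{Li}_2(-svz^{-1})-\mathrm{Li}_2(-sv^{-1}z)-\mathrm{Li}_2(-sv^{-1}z^{-1})$ arising from $\sum_{p\geq 1}\bigl((-sv)^p-(-s/v)^p\bigr)(z^p+z^{-p})/p^2$. This matters at both orders of differentiation: the Bethe equation is the exponentiated \emph{first} derivative of the potential, so a logarithm in the potential would insert terms of the form $\frac{svz}{1+svz}$ into \eqref{thetabethe} instead of the ratios $\frac{1+svz}{1+svz^{-1}}$ that actually appear, and the Hessian \eqref{Hessian} would then consist of second derivatives of logarithms rather than the terms listed there. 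Your expression also gives all four factors the same sign, whereas the coefficient $(v^{-1}-v)$ of $\bE_C$ in $\mathfrak{V}$ forces the $sv$- and $sv^{-1}$-terms to enter with opposite signs --- which is precisely why \eqref{thetabethe} contains a \emph{ratio} of the four factors and \eqref{Hessian} has the mixed signs $+\frac{svz}{1+svz}-\frac{sv^{-1}z}{1+sv^{-1}z}$, while only the $\bE_x$-piece contributes the symmetric product $\bigl[(1+svz)(1+svz^{-1})(1+sv^{-1}z)(1+sv^{-1}z^{-1})\bigr]^{g-1}$ to $(\theta'_{t,s})^{1-g}$. Once the Adams-operation rewriting is supplied and the dilogarithm potential is substituted for your logarithm, the remainder of your outline goes through exactly as in the paper.
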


\begin{proof}

When $s=0$, the theorem becomes the Verlinde formula for $\SU(1,1)$-Higgs bundles discussed earlier. We now highlight the effect of turning on $s$. 

The first step is to rewrite $\Lambda_s \mathfrak{V}$ on $\mathrm{Bun}_{\C^\times}$ into a form that Theorem 2.15 of \cite{TW} can handle. To achieve this, we will start with rewriting $\Lambda_s \mathfrak{V}$ over $\mathrm{Bun}_{\SL(2,\C)}$. Recall first that from the universal $\SL(2,\C)$-bundle over $\mathrm{Bun}_{\SL(2,\C)}\times C$ and a $\SL(2,\C)$ representation $R$, one can obtain a vector bundle, denoted as $E(R)$. We obviously have $\bE=E(R_F)$, where $R_F$ is the two-dimensional fundamental representation of $\SL(2,\C)$. One can obtain classes $E_x(R)$ and $E_C(R)$ in the K-theory of $\mathrm{Bun}_{\SL(2,\C)}$ similar to the construction of $\bE_x$ and $\bE_C$. Using the identity
$$
\Lambda_{s}=\exp\left[-\sum_{p>0}\frac{(-s)^p\psi^p}{p}\right]
$$
with $\psi^p$ denoting the $p$-th Adams operation and the relation $\psi^p E_C(R)=E_C(\psi^p R)/p$, one can rewrite
\beq\label{LambdaS}
\Lambda_s \mathfrak{V}=
E_x(\Lambda_s((v+v^{-1})R_F))^{\otimes (g-1)}\otimes\exp\left[-\sum_{p=1}^\infty \frac{(-s)^p E_C(\psi^p ((v^{-1}-v)R_F))}{p^2}\right].
\eeq

Similarly, on $\mathrm{Bun}_{\C^\times}$, we denote $E(R_n)$ the line bundle associated to the universal line bundle via the weight-$n$ representation over $\mathrm{Bun}_{\C^\times}\times C$. As before, we can construct K-theory class $E_x(R_n)$ and $E_C(R_n)$ over $\mathrm{Bun}_{\C^\times}$. From \eqref{LambdaS}, one can deduce the following identity in the K-theory of $\mathrm{Bun}_{\C^\times}$,
\beq\label{pLambdaS}
\Lambda_s\mathfrak{V}=E_x(\Lambda_s((v+v^{-1})(R_1+R_{-1})))^{\otimes (g-1)}\otimes\exp\left[-\sum_{p=1}^\infty \frac{\left((-s/v)^p-(sv)^p\right) E_C(R_p+R_{-p})}{p^2}\right].
\eeq
Since 
$$
\mathfrak{M}(\SU(1,1))=\mathrm{Spec}\mathrm{Sym}({R}\pi_*E(R_2+R_{-2})[1]),
$$
we have the equality
$$
\chi_\T\left(\mathfrak{M}(\SU(1,1)),\frak{L}^k\otimes \Lambda_s \bV \right)=\chi\left(\mathrm{Bun}_{\C^\times},\frak{L}^k\otimes \Lambda_s \mathfrak{V}\otimes \Lambda_{-t}{R}\pi_*E(R_2+R_{-2}) \right),$$
 and the theorem follows from a direct computation using Theorem 2.15 of \cite{TW} applied to $\mathrm{Bun}_{\C^\times}$. With the parametrization of the complexified maximal compact subgroup of $\SU(1,1)$ by $z$, the character of $R_n$ is given by $z^n$. Then Theorem 2.15 of \cite{TW} tells us that the first factor $E_x(\Lambda_s((v+v^{-1})(R_1+R_{-1}))^{\otimes (g-1)}$ in \eqref{pLambdaS} will result in an additional $s-$dependent factor 
$$\mathrm{Tr}_{R_s} (z)$$ 
in $\theta'^{1-g}_{s,t}$, with $R_s$ being the virtual representation $\left(\Lambda_s\left((v+v^{-1})(R_1+R_{-1})\right)\right)^{\otimes (g-1)}$. It is easy to see that the character of $R_s$ is given by 
$$\left[(1+svz)(1+svz^{-1})(1+sv^{-1}z^{-1})(1+sv^{-1}z)\right]^{g-1}.$$ 

On the other hand, the second factor in \eqref{pLambdaS} will modify both $\theta'_{s=0,t}$ and $F'_{s=0,t}$. The modification to the former is only through $\det H'^\dagger_{t,s}$, which is now given by the Hessian of the function 
$$
D'_s=D'_{s=0}+\sum_{p=1}^\infty\frac{\left((-sv)^p-(-s/v)^p\right)\mathrm{Tr}_{\psi^p (R_1+R_{-1})}(z)}{p^2}.
$$
Since $\mathrm{Tr}_{\psi^p (R_1+R_{-1})}(z)=z^p+z^{-p}$, the summation over $p$ in the above can be written as the sum of four terms,
$$
\sum_{p=1}^\infty\frac{\left((-sv)^p-(-s/v)^p\right)\left(z^p+z^{-p}\right)}{p^2}=\mathrm{Li}_2(-svz^{-1})+\mathrm{Li}_2(-svz)-\mathrm{Li}_2(-sv^{-1}z)-\mathrm{Li}_2(-sv^{-1}z^{-1}).
$$
Computing the Hessian of $D'_s$ gives the desired $\det H'^\dagger_{t,s}$.

Now we determine the set $F'_{s,t}$. When $s=0$, it is given by the set of solutions to the ``Bethe ansatz equations'',
$$
z^{2k}\left(\frac{1-v^2z^{-2}}{1-v^2z^2}\right)^2=1.
$$
When $s$ is turned on, from Theorem 2.15 of \cite{TW}, a new factor will be included on the left-hand side of the above equation, given by
$$
\exp\left[-\sum_{p=1}^\infty\frac{\left((-sv)^p-(-s/v)^p\right)(z^p-z^{-p})}{p}\right]=\left(\frac{1+sv^{-1}z^{-1}}{1+sv^{-1}z}\right)\left(\frac{1+svz}{1+svz^{-1}}\right).
$$

Taking into account all three modifications proves the theorem.

\end{proof}

In previous sections, we studied the index over the smooth part of the ``real slice'' $\frak{M}'(\SU(1,1))\subset \frak{M}(\SL(2,\C))$, for which we have the following. 

\begin{theorem} \label{globalindex} The equivariant index of $\frak{L}^k\otimes \Lambda_s\bV$ on $\mathfrak{M}'(\SU(1,1))$
\beq\label{IndForm3} 
\chi_\T\left(\mathfrak{M}'(\SU(1,1)),\frak{L}^k\otimes \Lambda_s \mathfrak{V} \right)=\frac{1}{2}\sum_{f\in F'_{t,s}}\theta'_{t,s}(f)^{1-g},
\eeq
equals to one-half of the equivariant index $\chi_\T\left(\mathfrak{M}(\SU(1,1)),\frak{L}^k\otimes \Lambda_s \bV \right)$ on $\mathfrak{M}(\SU(1,1))$.
\end{theorem}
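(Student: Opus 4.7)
The plan is to exploit the stack-theoretic identification $\mathfrak{M}'(\SU(1,1)) = [\mathfrak{M}(\SU(1,1))/W]$ noted in the paper, where $W = \mathbb{Z}/2\mathbb{Z}$ is the Weyl group of $\SL(2,\C)$ acting on $\mathrm{Bun}_{\C^\times}$ by $L \mapsto L^{-1}$, equivalently by swapping the two summands of the associated $\SL(2,\C)$-bundle $\calP \oplus \calP^{-1}$ together with the exchange $\phi \leftrightarrow \phi'$ of the two components of the Higgs field. Writing $q$ for the quotient map and noting that $\frak{L}^k$ and $\Lambda_s\bV$, being pulled back from $\mathfrak{M}(\SL(2,\C))$, carry canonical $W$-equivariant structures, the standard cohomological identity for stack quotients yields $H^*_\T(\mathfrak{M}'(\SU(1,1)), \mathcal{G}) = H^*_\T(\mathfrak{M}(\SU(1,1)), q^*\mathcal{G})^W$. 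Weyl averaging over $W = \{e, \sigma\}$ then gives
$$\chi_\T(\mathfrak{M}'(\SU(1,1)), \mathcal{G}) = \tfrac{1}{2}\Bigl[\chi_\T(\mathfrak{M}(\SU(1,1)), q^*\mathcal{G}) + \mathrm{tr}\bigl(\sigma \mid H^*_\T(\mathfrak{M}(\SU(1,1)), q^*\mathcal{G})\bigr)\Bigr].$$
Theorem \ref{IndSU11} identifies the first term in the bracket as $\sum_{f \in F'_{t,s}}\theta'_{t,s}(f)^{1-g}$, so the theorem reduces to showing that the Lefschetz-type trace of $\sigma$ vanishes.

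To establish this vanishing, I would re-run the argument of Theorem \ref{IndSU11} in a $\sigma$-twisted (holomorphic Lefschetz) form, applying Theorem 2.15 of \cite{TW} equivariantly with respect to $\sigma$. In the rewriting \eqref{pLambdaS} of $\Lambda_s\mathfrak{V}$, the action of $\sigma$ sends the weight-$p$ representation $R_p$ to $R_{-p}$, so the exponential factor is manifestly $\sigma$-invariant and the factor $E_x(\Lambda_s((v+v^{-1})(R_1+R_{-1})))^{g-1}$ has character symmetric under $z \mapsto z^{-1}$. The $\sigma$-twisted Teleman--Woodward formula then produces a Lefschetz sum localized to the $\sigma$-fixed locus in $\mathrm{Bun}_{\C^\times}$, namely the $2$-torsion subgroup $\J_0[2] \subset \J_0$ (together with a $B\C^\times$ factor). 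The weights of this sum arise from the $\sigma$-action on the normal bundle and on $R\pi_* E(R_{+2}+R_{-2})$, and I expect them to telescope to zero because $\sigma$ interchanges the weight spaces $R_{+2}$ and $R_{-2}$ of the Higgs contribution $\frak{m}^\C$, introducing a sign that kills the naive fixed-point contribution.

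The main obstacle will be carrying out this $\sigma$-twisted Teleman--Woodward computation rigorously and verifying the claimed cancellations at the Weyl-fixed locus, where one must track the interplay of the $\sigma$-equivariant structures on $\frak{L}^k$, on $\bV$, and on the normal bundle to $\J_0[2]$ with the $\T$-weights appearing in \eqref{Hessian} and \eqref{littletheta}. An alternative but equally involved route is to instead prove the equivalent statement $\chi_\T(\mathfrak{M}'(\SU(1,1)), L_\sigma \otimes \mathcal{G}) = \chi_\T(\mathfrak{M}'(\SU(1,1)), \mathcal{G})$, where $L_\sigma$ is the sign eigensheaf in the decomposition $q_*\calO_{\mathfrak{M}(\SU(1,1))} = \calO_{\mathfrak{M}'(\SU(1,1))} \oplus L_\sigma$, by exhibiting an explicit $\T$-equivariant isomorphism $L_\sigma \otimes (\frak{L}^k \otimes \Lambda_s\bV) \cong \frak{L}^k \otimes \Lambda_s\bV$, but I expect the Lefschetz approach to be the cleanest.
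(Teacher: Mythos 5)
Your reduction is exactly the one the paper uses: its entire proof is a single sentence asserting that the result follows from $\mathfrak{M}'(\SU(1,1))=\mathfrak{M}(\SU(1,1))/W$ together with the $W$-invariance of $\frak{L}^k\otimes\Lambda_s\bV$, which is precisely the Weyl-averaging step you set up. Where you go beyond the paper is in observing that taking $W$-invariants literally gives $\tfrac12\bigl[\chi_\T(\mathfrak{M}(\SU(1,1)),q^*\mathcal{G})+\mathrm{tr}\bigl(\sigma\mid H^*\bigr)\bigr]$, so that the stated halving is equivalent to the vanishing of the Lefschetz-type trace of the nontrivial Weyl element --- a term the paper's one-line proof silently elides. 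Identifying that term is the right and honest way to structure the argument.

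The gap is that your proposal does not actually establish this vanishing: you only conjecture that the fixed-point contributions ``telescope to zero,'' and the $\sigma$-twisted Teleman--Woodward computation localized to $\J_0[2]\times B\C^\times$ that you invoke is not available off the shelf in \cite{TW} in the twisted form you need; carrying it out would be a substantial independent computation, and it is exactly where the content of the theorem lies if one demands more than the paper's own assertion. A much cheaper route to the cancellation, and the one most consistent with how the theorem is used in Section 6, is to work at the level of the fixed-point sum itself: the Bethe equation \eqref{thetabethe} and the function $\theta'_{t,s}$ of \eqref{littletheta} are invariant under $z\mapsto z^{-1}$, and for generic $t,s$ no solution is fixed by this involution, so $F'_{t,s}$ decomposes into free $W$-orbits on which $\theta'_{t,s}$ is constant; the index over the quotient is then the sum over $F'_{t,s}/W$, i.e.\ half the sum over $F'_{t,s}$. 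Either that observation, or your alternative of exhibiting $\chi_\T(\mathfrak{M}'(\SU(1,1)),L_\sigma\otimes\mathcal{G})=\chi_\T(\mathfrak{M}'(\SU(1,1)),\mathcal{G})$, would close the argument with far less machinery than the twisted Lefschetz computation you sketch.
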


\begin{proof}
This theorem follows from the fact that $\mathfrak{M}'(\SU(1,1))=\mathfrak{M}(\SU(1,1))/W$ and $\frak{L}^k\otimes \Lambda_s\bV$ is invariant under the action of the Weyl group $W=\Z/2\Z$. 

\end{proof}

\section{Symmetry for $i=g-1$} \label{symmetryg-1}
To compare the formula \eqref{IndForm3} for $k=2$ with the formula \eqref{compact} we rewrite it in the same residue form. First we note  the LHS of \eqref{thetabethe} for $k=2$ equals $f(z,s,v)$ from  \eqref{bethev}.
We also see that \eqref{Hessian}  agrees with  $$z\frac{{\partial\over \partial z} f(z,s,v)}{f(z,s,v)}$$ which is computed in \eqref{hessianz}. Finally we see from \eqref{littletheta} that $$\theta^\prime_{t,s} = \frac{f(z,s,v)}{h(z,s,v)}.$$
By recalling that $F^\prime_{t,s}$ denotes the set of  solutions of $f=1$, which are all simple zeroes, we can  compute \begin{multline*}\sum_{\varphi\in F'_{t,s}}\theta'_{t,s}(\varphi)^{1-g}=\sum_{a\in F'_{t,s}}\left(\frac{h(a,s,v)}{f(a,s,v)}\right)^{g-1}= \frac{1}{2\pi i} \oint_{|1-f|=\epsilon} \left(\frac{h}{f}\right)^{g-1} \frac{1}{1-f} {-df\over f}=\\= \frac{1}{2\pi i}  \oint_{|f|=\epsilon} \left(\frac{h}{f}\right)^{g-1} \frac{1}{1-f} {df\over f} +\frac{1}{2\pi i}   \oint_{|z^2v^2-1|=\epsilon} \left(\frac{h}{f}\right)^{g-1} \frac{1}{1-f} {df\over f}=\\= \frac{1}{2\pi i}  \oint_{|f|=\epsilon} \sum^\infty _{j=0} \left(\frac{h}{f}\right)^{g-1} f^{j}{df\over f} -\frac{1}{2\pi i} \oint_{|z^2v^2-1|=\epsilon} \sum^\infty _{j=1} \left(\frac{h}{f}\right)^{g-1} f^{-j}{df\over f} =\\= \frac{1}{2\pi i}\oint_{|f|=\epsilon} \sum^\infty _{j=0} \left(\frac{h}{f}\right)^{g-1} f^{j}{df\over f} +\frac{1}{2\pi i} \oint_{|f|=\epsilon} \sum^\infty _{j=1} \left(\frac{h}{f}\right)^{g-1} f^{-j}{df\over f} =\\ =  \frac{1}{2\pi i}\oint_{|f|=\epsilon} \left(\frac{h}{f}\right)^{g-1}{df\over f} + \sum_{i=0}^{g-2} \chi_\T(\M(\GL_2); \calL_i\otimes \Lambda_{s^2}) +\frac{1}{2\pi i}  \oint_{|f|=\epsilon} \sum^\infty _{j=1} \left(\frac{h}{f}\right)^{g-1} f^{j}{df\over f} =\\ =  \frac{1}{2\pi i}  \oint_{|f|=\epsilon} \left(\frac{h}{f}\right)^{g-1}{df\over f} + 2 \sum_{i=0}^{g-2} \chi_\T(\M(\GL_2); \calL_i\otimes \Lambda_{s^2}). 
\end{multline*}

In the third equation we used the residue theorem and the fact that the poles of the integrand are the zeroes of $1-f$, $f$ and $z^2v^2-1$. In the fourth that the power series expansions converge on the respective contours. In the fifth equation we used the residue theorem again, noting
that the only poles of the integrand are the zeroes  of $f$ and $z^2v^2-1$. In the sixth \eqref{compact}, \eqref{zerof} and \eqref{symmetryfz}. Finally in the seventh equation we used \eqref{compact} and \eqref{zerof} again. 

Thus from \eqref{globalindex} \beq \label{indexstack}
\chi_\T\left(\mathfrak{M}'(\SU(1,1)),\frak{L}^2\otimes \Lambda_s \mathfrak{V} \right)= {1\over 2}\frac{1}{2\pi i}  \oint_{|f|=\epsilon} \left(\frac{h}{f}\right)^{g-1}{df\over f} + \sum^{g-1} _{j=1} \frac{1}{2\pi i}\oint_{|f|=\epsilon} \left(\frac{h}{f}\right)^{g-1} f^{j}{df\over f}
\eeq

	\begin{remark} \label{cohomologicalvb} From the observation in Remark~\ref{vanishing} we see that   even though $\bV$ and consequently $\Lambda_s$ are no longer vector bundles on $\mathfrak{M}'(\SU(1,1))$, cohomologically $\bV$ behaves like a rank $4g-4$ vector bundle on $\mathfrak{M}'(\SU(1,1))$. Interestingly, the same observation about the virtual Dirac bundle $\bV$ was the crucial point in the proof of the vanishing of the intersection form on the rank $2$ odd degree Higgs moduli space in \cite{hausel-thesis,hausel-vanishing}.
\end{remark}

From \eqref{indexstack} we also get 
\bes 2 \chi_\T\left(\mathfrak{M}'(\SU(1,1)),\frak{L}^2\otimes \Lambda_s \mathfrak{V} \right)- 2 \sum_{i=0}^{g-2} \chi_\T(\M(\GL_2); \calL_i\otimes \Lambda_{s^2}) = \frac{1}{2\pi i}  \oint_{|f|=\epsilon} \left(\frac{h}{f}\right)^{g-1}{df\over f} 
\ees

We can now combine Theorem~\ref{globalindex} and Proposition~\ref{symmetryformal} with the computation above and deduce the following

\begin{corollary} \label{mainc} For $0\leq j < g-1$ we have 
		\bes 2 \chi_\T\left(\mathfrak{M}'(\SU(1,1)),\frak{L}^2\otimes \Lambda^{2j} \mathfrak{V} \right)- 2 \sum_{i=0}^{g-2} \chi_\T(\M(\GL_2); \calL_i\otimes \Lambda_j)  =\chi_\T(\M(\GL_2);\calL_j\otimes\Lambda_{g-1})\ees
\end{corollary}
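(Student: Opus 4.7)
The plan is to extract the coefficient of $s^{2j}$ from the identity
$$2\chi_\T\!\left(\mathfrak{M}'(\SU(1,1)),\frak{L}^2\otimes\Lambda_s\mathfrak{V}\right) - 2\sum_{i=0}^{g-2}\chi_\T\!\left(\M(\GL_2);\calL_i\otimes\Lambda_{s^2}\right) = \frac{1}{2\pi i}\oint_{|f|=\epsilon}\!\left(\frac{h}{f}\right)^{\!g-1}\!\frac{df}{f}$$
displayed immediately before the statement of the corollary, and then to apply Proposition~\ref{symmetryformal} together with the residue formula \eqref{compact} to rewrite the resulting right-hand side as $\chi_\T(\M(\GL_2);\calL_j\otimes\Lambda_{g-1})$.

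First I would take $[s^{2j}]$ of the left-hand side. Since $\Lambda_s\mathfrak{V}=\sum_p s^p\Lambda^p\mathfrak{V}$ and $\Lambda_{s^2}=\bigoplus_{i=0}^{2g-2}s^{2i}\Lambda_i$ by \eqref{lambdas}, this reproduces exactly the left-hand side of the corollary. On the right-hand side, I would express coefficient extraction as a contour integral in $s$,
$$[s^{2j}]\,\frac{1}{2\pi i}\oint_{|f|=\epsilon}\left(\frac{h}{f}\right)^{g-1}\frac{df}{f} = \frac{1}{(2\pi i)^2}\oint_{|s|=\epsilon}\oint_{|f|=\epsilon}\frac{h^{g-1}}{f^{g-1}s^{2j}}\,\frac{df}{f}\,\frac{ds}{s}.$$

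Next I would apply Proposition~\ref{symmetryformal} with the pair $(i,j)\mapsto(g-1,j)$ to swap the exponents on $f$ and $s^{2}$, obtaining
$$\frac{1}{(2\pi i)^2}\oint_{|s|=\epsilon}\oint_{|f|=\epsilon}\frac{h^{g-1}}{f^{g-1}s^{2j}}\frac{df}{f}\frac{ds}{s} = \frac{1}{(2\pi i)^2}\oint_{|s|=\epsilon}\oint_{|f|=\epsilon}\frac{h^{g-1}}{f^{j}s^{2(g-1)}}\frac{df}{f}\frac{ds}{s}.$$
Under the hypothesis $0\leq j<g-1$, the indices $(j,g-1)$ lie in the admissible range of \eqref{compact}, which identifies the last integral with $\chi_\T(\M(\GL_2);\calL_j\otimes\Lambda_{g-1})$. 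This finishes the proof.

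The genuine content is carried by Proposition~\ref{symmetryformal}, whose change of variables $\varphi$ effects the non-trivial exchange of the exponents of $f$ and $s^2$; note that the virtual nature of $\bV$ on $\M(\GL_2)$ is what makes \eqref{compact} available at the value $j=g-1$ on the $\Lambda$-slot, even though $i=g-1$ would be forbidden (this is what forces the use of the symmetry rather than a direct computation). The only thing to check is that the $[s^{2j}]$ operation commutes with $\chi_\T$ and with the $f$-contour integral, which is immediate since $\chi_\T$ is defined weight-by-weight with finite-dimensional pieces and the series in $s$ is formal. Thus no serious obstacle arises once the master identity preceding the corollary and the symmetry of Proposition~\ref{symmetryformal} are in hand.
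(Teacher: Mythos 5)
Your proposal is correct and follows exactly the route the paper intends: the paper's own ``proof'' is the one-line instruction to combine the displayed master identity with Proposition~\ref{symmetryformal} and \eqref{compact}, and you have simply written out that combination (coefficient extraction in $s^{2j}$, the swap $(g-1,j)\mapsto(j,g-1)$, and the identification via \eqref{compact} using $j<g-1$ in the $\calL$-slot and $g-1$ in the $\Lambda$-slot). No discrepancy with the paper's argument.
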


\begin{remark} For a completely symmetrical picture we would like to see the left hand side agree with $\chi_\T(\M(\GL_2); \calL_{g-1}\otimes \Lambda_j)$ with the special convention for ${\cal L}_{g-1}$ in \eqref{lg-1}. It seems however that the non-semi-stable locus in $\mathfrak{M}'(\SU(1,1))$ will contribute non-trivially to the equivariant index. We expect that a modification of the extension of the Dirac complex $\Lambda_s\bV$ to $\mathfrak{M}(\SL_2)$ will solve this problem. 
	\end{remark}

\section{Reflection of mirror symmetry}
\label{reflect}

If $\calF_1$ and $\calF_2$ are $\T$-equivariant coherent sheaves on a semi-projective (cf. \cite{hausel-large}) $\T$-variety $X$, then we can define (cf. \cite[(2.23),(3.4)]{gukov}) the {\em equivariant Euler form} as \begin{multline*}\chi_\T(X;\calF_1,\calF_2)=\sum_{k,l} \dim(H^k({ R} {\mathcal Hom}(\calF_1,\calF_2))^l) (-1)^k t^{-l}=\\ =\sum_{k,l} \dim\left(\Hom_{{\mathbf D}_{coh}(X)}(\calF_1,\calF_2[k])^l\right) (-1)^k t^{-l}=\sum_{k,l} \dim(\Ext^k(X;\calF_1,\calF_2)^l) (-1)^k t^{-l} \in \C((t)),\end{multline*}
which we can compute by Hirzebruch-Riemann-Roch and localization: $$\chi_\T(X;\calF_1,\calF_2)=\int_{X}\ch_\T(\calF_1)^*\ch_\T(\calF_2)\td_\T(X)=\int_{X^\T} \ch_\T(\calF_1)^*|_{X^\T}\ch_\T(\calF_2|_{X^\T}) \ch_\T(\Sym(N^*_{X^\T}))  \td({X^\T}),$$
where $N_{X^\T}$ is the normal bundle of $X^\T$ in $X$ and for $$a=a_0+a_2+\dots +a_{2\dim(X)}\in H^{2*}(X).$$ We define the class $$a^*=\sum (-1)^i a_{2i}\in H^{2*}(X).$$

\begin{corollary}\label{mirror} When $0\leq i,j<g-1$ $$\chi_\T(\M(\GL_2);\calL_i,\Lambda_j)=-t^{g-1}\chi_\T(\M(\GL_2);\Lambda_i,\calL_j).$$
\end{corollary}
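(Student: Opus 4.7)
The plan is to deduce Corollary~\ref{mirror} from Theorem~\ref{mainth} by establishing two equivariant self-dualities in $K_\T(\M(\GL_2))$, namely $\Lambda_i^\vee\cong\Lambda_i$ and $[\calL_i^\vee]=-t^{g-1}[\calL_i]$. Granting these, the corollary follows formally from the standard identity $\chi_\T(\M;\calF_1,\calF_2)=\chi_\T(\M;\calF_1^\vee\otimes^L\calF_2)$ together with Theorem~\ref{mainth} by chaining
\begin{align*}
\chi_\T(\M;\calL_i,\Lambda_j)&=\chi_\T(\M;\calL_i^\vee\otimes\Lambda_j)=-t^{g-1}\,\chi_\T(\M;\calL_i\otimes\Lambda_j)\\
&=-t^{g-1}\,\chi_\T(\M;\calL_j\otimes\Lambda_i)=-t^{g-1}\,\chi_\T(\M;\Lambda_i^\vee\otimes\calL_j)=-t^{g-1}\,\chi_\T(\M;\Lambda_i,\calL_j).
\end{align*}

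To establish $\Lambda_i^\vee\cong\Lambda_i$ I would apply relative Grothendieck--Serre duality to the Higgs complex $[\bE\to\bE K_C]$ on $C\times\M(\SL_2)$. Using $\omega_\pi=K_C$, the identification $\bE\cong\bE^\vee$ for $\SL_2$-Higgs bundles, and the weight twist on the second term of the complex as described in Section~\ref{background}, one computes $[\bV^\vee]=t\,[\bV]$ in $K_\T$. Consequently $(\Lambda^{2i}\bV)^\vee$ is isomorphic to $\Lambda^{2i}\bV$ further twisted by a trivial line bundle of $\T$-weight $-2i$, and the compensating weight $-i$ twist built into the definition of $\Lambda_i$ in Section~\ref{background} precisely cancels this shift, yielding $\Lambda_i^\vee\cong\Lambda_i$.

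To establish $[\calL_i^\vee]=-t^{g-1}[\calL_i]$ I would apply Grothendieck duality to the closed embedding $\iota\colon L_i\hookrightarrow\M(\GL_2)$ of codimension $c=4g-3$, obtaining
\[
\calL_i^\vee\cong(-1)^c\,\iota_*\bigl(\calL^{-2}|_{L_i}\otimes\omega_{L_i/\M}\bigr),
\]
so the sign $(-1)^{4g-3}=-1$ supplies the minus in $-t^{g-1}$. It then remains to prove the equivariant isomorphism of $\omega_{L_i/\M}$ with $\calL^4|_{L_i}$ twisted by a trivial bundle of $\T$-weight $1-g$. The underlying non-equivariant isomorphism follows from the Lagrangian structure (so that $\omega_{L_i/\M}=\det N_{L_i/\M}\cong\omega_{L_i}$ up to a scalar shift) together with a Chern-class computation: exploiting the description of $L_i$ as a vector bundle with fiber $H^0(K_C^2(-D))$ over $\J_\bi\times C_{2i}$, one checks $c_1(\omega_{L_i})=4\bi\,\eta+2\theta=c_1(\calL^4|_{L_i})$ using \eqref{chei}, \eqref{todd}, \eqref{detfinal} and $\omega_{\J_\bi}=\calO$. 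The scalar $\T$-twist is then pinned down by bookkeeping the weight~$2$ action on $E_i$, the trivial action on the base $\J_\bi\times C_{2i}$, and the $\T$-weight of the holomorphic symplectic form on $\M(\GL_2)$.

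The main obstacle is this equivariant bookkeeping: the non-equivariant identification is direct, but pinning down the exact power $t^{g-1}$ requires consistent normalization of the symplectic weight and cross-checking against the equivariant characters appearing in Proposition~\ref{smoothformula}. An alternative---and perhaps more self-contained---route would be to substitute the dualized Chern character $\ch_\T(\calL_i)^*$ directly into the residue formula \eqref{main}, exploit the symmetry $\eta\mapsto-\eta,\;\theta\mapsto-\theta$ on the cohomological factors to re-express the integrand, and read off $-t^{g-1}$ as the correction factor relative to Theorem~\ref{mainth}.
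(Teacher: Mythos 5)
Your overall strategy---reduce Corollary~\ref{mirror} to Theorem~\ref{mainth} by computing the equivariant self-duals of $\calL_i$ and $\Lambda_j$ and chaining the resulting twists---is exactly the paper's, and your final chain of equalities is formally identical to the one in the paper's proof. The problem lies in the two intermediate duality statements. The paper's explicit Chern-character computation (restricting everything to $F^\prime_i$ and invoking \cite[Lemma 5.4.9]{chriss-ginzburg}) gives $\chi_\T(\M(\GL_2);\calL_i,\Lambda_j)=-t^{-1}\chi_\T(\M(\GL_2);\calL_i\otimes\Lambda_j)$ and $\chi_\T(\M(\GL_2);\Lambda_i,\calL_j)=t^{-g}\chi_\T(\M(\GL_2);\Lambda_i\otimes\calL_j)$: the duality factors are $-t^{-1}$ and $t^{-g}$, not your $-t^{g-1}$ and $1$. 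Only the ratio of the two factors enters the final identity, and $(-t^{-1})/t^{-g}=-t^{g-1}=(-t^{g-1})/1$, which is why you land on the correct answer; but each of your two lemmas is off by a compensating factor of $t^{g}$, so as written you derive the right statement from two incorrect intermediate claims.

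The slip is clearest on the $\Lambda$ side. Relative Serre duality on $C\times\M(\SL_2)$ gives self-duality of $\bV$, hence of the vector bundle $\Lambda_i$, \emph{on} $\M(\SL_2)^s$ (the paper's \eqref{chlambdas} and \eqref{extchar} confirm $\ch_\T(\Lambda^{j*}|_{F^\prime_i})=\ch_\T(\Lambda_j|_{F^\prime_i})$). But the Ext groups in the corollary live on $\M(\GL_2)$, where $\Lambda_i$ is the pushforward of that bundle along the codimension-$2g$ embedding $\M(\SL_2)^s\hookrightarrow\M(\GL_2)^s$. Grothendieck duality for this embedding contributes $\det N\,[-2g]$, and although $N$ is topologically trivial, its $H^0(C,K_C)$ summand scales with the Higgs field, so $\det N$ carries equivariant weight $g$ and produces exactly the factor $t^{-g}$ that your claim $\Lambda_i^\vee\cong\Lambda_i$ omits. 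Correspondingly, your asserted weight $1-g$ for $\omega_{L_i/\M}\otimes\calL^{-4}|_{L_i}$ disagrees with the paper's computed net factor $-t^{-1}$. The non-equivariant ingredients of your argument---the sign $(-1)^{4g-3}$, the Lagrangian identification of $\omega_{L_i/\M}$ with $K_{L_i}$ up to a weight, and $c_1(K_{L_i})=4c_1(\calL|_{L_i})$---do match the paper's Lemma; what needs to be redone is the weight bookkeeping, most safely by restricting all equivariant Chern characters to $F^\prime_i$ and comparing with Proposition~\ref{smoothformula}, which is essentially the ``alternative route'' you sketch at the end and is in fact the route the paper takes.
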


\begin{proof} We expect that \beq\label{selfli}\calL_i^\vee \cong \calL_i[-4g+3]\eeq and \beq\label{selflai}{\Lambda_i}^\vee \cong \Lambda_i[-2g]\eeq self-dualities in the derived category $D_{coh}(\M(\GL_2))$. Indeed, \eqref{selflai} holds on $\M(\GL_2)^s$ because there it is a pushforward of a vector bundle from $\M(\SL_2)^s$ of codimension $2g$ with trivial normal bundle, and as \cite[\S 7, Remarks 1, 2]{hitchin} argues the vector bundle $\Lambda_i|_{\M(\SL_2)^s}$ is self-dual, due to the quaternionic structure on it. On the other hand
 \eqref{selfli} holds for $i<g-1$, because of \cite[Corollary 3.40]{huybrechts}, using that $\M(\GL_2)^s$ is symplectic  
and by the following 
\begin{lemma} For $0\leq i < g-1$ we have $$K_{L_i}=\calL^4|_{L_i}=\calL^2_i|_{L_i}.$$
\end{lemma}
\begin{proof} This follows from $c_1(K_{L^\prime_i}|_{F_i^\prime})=c_1(\calL^4|_{F^\prime_i}).$ Starting with \cite[(14.10)]{macdonald} and \eqref{chei}, we compute $$c_1(K_{L^\prime_i}|_{F_i^\prime})=c_1(K_{F^\prime_i})c_1( E^*_i)=(g-2i-1)\eta+\theta+(3g-3-2i)\eta +\theta=(4g-4-4i)\eta+2\theta=4c_1(\calL|_{F^\prime_i}).$$
from \eqref{detfinal}
\end{proof}

To also trace how the $\T$-action intertwines these isomorphisms we  compute using \cite[Lemma 5.4.9]{chriss-ginzburg}

\begin{multline*}\ch_\T(\calL_i)^*|_{L_i}=\ch_\T\left(\sum_j(-1)^j\Lambda_jN^*_{L_i}\right)^*\ch_\T(\calL^{-2}|_{L_i})=\ch_\T\left(\sum_j(-1)^j\Lambda_j N_{L_i}\right)\ch_\T(\calL^{-2}|_{L_i})=\\ =\frac{(-1)^{4g-3}e^{c^\T_1(N_{L_i})}}{\ch_\T(\Sym N_{L_i}^*)}\ch_\T(\calL^{-2}|_{L_i})\end{multline*}
Consequently, \begin{multline}  \ch_\T(\calL_i)^*|_{F^\prime_i}=\frac{(-1)^{4g-3}e^{c^\T_1(N_{L_i}|_{F^\prime_i})}}{\ch_\T(\Sym N_{L_i}^*)}\ch_\T(\calL^{-2}|_{F^\prime_i})=\frac{(-1)^{4g-3}e^{c^\T_1(E^*_i)+c^\T_1(T^*_{F_i})+c^\T_1(\calL^{-2}|_{F^\prime_i})}}{\ch_\T(\Sym N_{L_i}^*)}=\\  =\frac{(-1)^{4g-3}e^{(g-1+2\bi)(\eta-u)+\theta+(g-1-2i)\eta+\theta+(g+2i)u-2\bi(\eta-u)-\theta}}{\ch_\T(\Sym N_{L_i}^*)}=\frac{-\ch_\T(\calL^2|_{F^\prime_i})t^{-1}}{\ch_\T(\Sym N_{L_i}^*)} \label{lidual} =-\ch_\T(\calL_i|_{F^\prime_i})t^{-1}.\end{multline}
Here we used $$c_1^\T(E_i^*)=(g-1+2\bi)(\eta-u)+\theta$$ from \eqref{chei} and the fact that $\rank(E_i^*)=g-1+2\bi$ and the $\T$-action has weight $-1$ on $E_i^*$. We also used $$c_1^\T(T^*_{F_i})=(g-1-2i)\eta+\theta+(g+2i)u$$ from \cite[(14.10)]{macdonald} and $\rank(T^*_{F_i})=g+2i$ and the $\T$-action has weight one on $T^*_{F_i}$. Finally, $$c^\T_1(\calL^{-2}|_{F^\prime_i})=-2\bi(\eta-u)-\theta$$ from \eqref{detfinal} .
Similarly, $$\ch(\Lambda_j)^*|_{F^\prime_i}=\frac{(-1)^{2g}t^{-g}\ch_\T(\Lambda^{j*}|_{F_i^\prime})}{\ch_\T(\Sym N^{F_i}_{F_i^\prime})}=\frac{t^{-g}\ch_\T(\Lambda_j|_{F_i^\prime})}{\ch_\T(\Sym N^{F_i}_{F_i^\prime})},$$
because from \eqref{chlambdas} and \eqref{extchar} we can deduce $$\ch_\T(\Lambda^{j*}|_{F^\prime_i})=\ch_\T(\Lambda_j|_{F^\prime_i}).$$

Thus we can compute from \eqref{lidual} \begin{multline*}\chi_\T(\M(\GL_2);\calL_i,\Lambda_j)=\int_{F^\prime_i} \ch_\T(\calL_i)^*|_{F^\prime_i}\ch_\T(\Lambda_j|_{F^\prime_i})\ch_\T(\Sym N_{F_i^\prime}^*)\td(F^\prime_i)=\\ =\int_{F^\prime_i}-t^{-1}\ch(\calL^2|_{F^\prime_i})\ch_\T(\Lambda_j|_{F^\prime_i})\ch_\T(\Sym E^*_i)\td(F^\prime_i)=-t^{-1}\chi_\T(\M(\GL_2);\calL_i\otimes \Lambda_j).\end{multline*}

On the other side we have

 \begin{multline*}\chi_\T(\M(\GL_2);\Lambda_i,\calL_j)= \int_{F^\prime_j} \ch_\T(\Lambda_i)^*|_{F^\prime_i}\ch_\T(\calL_j|_{F^\prime_j})\ch_\T(\Sym N_{F_i^\prime}^*)\td(F^\prime_i)=\\ = \int_{F^\prime_i} t^{-g} \ch_\T(\Lambda_i|_{F^\prime_j})\ch_\T(\Sym N_{F_i^\prime}^*)\td(F^\prime_i)= t^{-g}\chi_\T(\M(\GL_2);\Lambda_i\otimes \calL_j),\end{multline*}
and the result follows.
\end{proof}
\begin{remark} Based on the Fourier-Mukai transform on the generic fibers of the Hitchin map, we expect that (the semi-classical limit of the) mirror of $\calL_i$ will be the shifted coherent sheaf $\Lambda_i[-g]$ and that of $\Lambda_j$ the shifted coherent sheaf $\calL_j[-3g+3]$. This explains the sign in Corollary~\ref{mirror}.
\end{remark}
\begin{remark} The extra $t^{g-1}$ factor in Corollary~\ref{mirror} seems to be necessary, and the reason for its appearance could be the fact
that the canonical bundle $K_{\M(\SL_2)}$ is trivial with a weight $3g-3$  equivariant structure, therefore has no square root for $g$ even.  
\end{remark}


\begin{thebibliography}{999}
	
\bibitem[AGP]{AGP}{\sc  Andersen, J.~E., Gukov, S. {\rm and} Pei, D.}: The Verlinde formula for Higgs bundles.
  \href{http://arxiv.org/abs/1608.01761}{arXiv:1608.01761}

\bibitem[BS]{baraglia-schaposnik}{\sc Baraglia, D. {\rm and} Schaposnik, L.}:
Real structures on moduli spaces of Higgs bundles. 
{\em Adv. Theor. Math. Phys.} {\bf 20} (2016), no. 3, 525--551.
	
\bibitem[BGG]{bradlow-etal}{\sc Bradlow, S.B., García-Prada, O. \rm{and} Gothen, P.B.}: Surface group representations 
and $U(p, q)$-Higgs bundles. {\em J. Differ. Geom.} {\bf 64}, 111--170 (2003)

\bibitem[CG]{chriss-ginzburg} {\sc Chriss, N. {\rm and} Ginzburg, V.}
{\em Representation theory and complex geometry.}  Birkhäuser Boston, Inc., Boston, MA, 1997.

	\bibitem[DP1]{donagi-pantev}{\sc Donagi, R. {\rm and} Pantev, T.}: Langlands duality for Hitchin systems, {\em Invent. Math.} {\bf 189} 3, 653--735. \href{http://arxiv.org/abs/math/0604617}{arXiv:math/0604617}
	
\bibitem[GGM]{GGM}{\sc {Garcia-Prada}, O., {Gothen}, P.~B. {\rm and} {Riera}, I.~M.~i}: The Hitchin-Kobayashi correspondence, Higgs pairs and surface group representations, \href{https://arxiv.org/abs/0909.4487}{arXiv:0909.4487}
	
\bibitem[GWZ]{groechenig-etal} {\sc Gr\"ocheing, M., Wyss, D. {\rm and} Ziegler, P.}:  Topological mirror symmetry via p-adic integration,   \href{https://arxiv.org/abs/1707.06417}{arXiv:1707.06417}

\bibitem[Gu]{gukov} {\sc  Gukov, S.}:
Quantization via mirror symmetry. {\em
Jpn. J. Math.} {\bf 6} 2011, no. 2, 65--119. \href{https://arxiv.org/abs/1011.2218}{arXiv:1011.2218}
	
	\bibitem[GP]{GP} {\sc Gukov, S. {\rm and} Pei, D.}: Equivariant Verlinde formula from fivebranes and vortices. \href{https://arxiv.org/abs/1501.01310}{arXiv:1501.01310} (2015).
	
	\bibitem[GPYY]{GPYY} {\sc Gukov, S., Pei, D., Yan, W. {\rm and} Ye, K.}: Equivariant Verlinde algebra from superconformal index and Argyres-Seiberg duality. \href{https://arxiv.org/abs/1605.06528}{arXiv:1605.06528} (2016).
	
	\bibitem[GW]{GW} 
  {\sc Gaiotto, D. {\rm and} Witten, E.}: S-Duality of Boundary Conditions In N=4 Super Yang-Mills Theory,
  {\em Adv.\ Theor.\ Math.\ Phys.\ }  {\bf 13}, no. 3, 721 (2009)
	
	
	\bibitem[Ha1]{hausel-thesis}{\sc  Hausel, T.}: Geometry of the moduli space
	of Higgs bundles, PhD thesis, University of Cambridge, 1998, \href{http://arxiv.org/abs/math.AG/0107040}{arXiv:math.AG/0107040} 

\bibitem[Ha2]{hausel-vanishing}
{\sc Hausel., T.}:
\newblock Vanishing of intersection numbers on the moduli space of {H}iggs
bundles.
\newblock {\em Adv. Theor. Math. Phys.}, 2(5):1011--1040, 1998.


	\bibitem[HL]{HL} {\sc {Halpern-Leistner}, D.}: The equivariant Verlinde formula on the moduli of Higgs bundles. \href{https://arxiv.org/abs/1608.01754}{arXiv:1608.01754}.
	

\bibitem[HV]{hausel-large} {\sc Hausel,  T. {\rm and} Fernando, R. V.}:
Cohomology of large semiprojective hyperk\"ahler varieties. 
{\em Ast\'erisque} No. {\bf 370} (2015), 113--156, \href{https://arxiv.org/abs/1309.4914}{arXiv:1309.4914}

	\bibitem[HT1]{HT1} {\sc Hausel, T. {\rm and} Thaddeus, M.}:
	\newblock { Generators for the cohomology ring of the moduli space of rank 2 Higgs bundles }, {\em Proc. London Math.\
		Soc.} {\bf 88} (2004) 632--658,  \href{http://arxiv.org/abs/math/0003093}{\tt arXiv:math.AG/0003093}
	
	\bibitem[HT2]{HT2}  {\sc Hausel, T. {\rm and} Thaddeus, M.}:
	\newblock   Relations in the cohomology ring of the moduli space of rank 2 Higgs bundles,\newblock {\em Journal of the American Mathematical \ Society}, {\bf 16} (2003), 303-329,  \href{http://arxiv.org/abs/math/0003094}{\tt arXiv:math.AG/0003094}
	
	\bibitem[HT]{hausel-thaddeus}
	{\sc Hausel, T. {\rm and} Thaddeus, M.}:
	\newblock Mirror symmetry, {L}anglands duality, and the {H}itchin system.
	\newblock {\em Invent. Math.}, 153(1):197--229, 2003. \href{https://arxiv.org/abs/math/0205236}{arXiv:math/0205236}
	
\bibitem[HSz]{hausel-szenes}{\sc Hausel, T. {\rm and} Szenes, A.}: Equivariant Verlinde algebra for Higgs bundles, {\em in preparation}

		\bibitem[Hi1]{hitchin-self}
	{\sc Hitchin, N.}:
	\newblock The self-duality equations on a {R}iemann surface.
	\newblock {\em Proc. London Math. Soc. (3)}, 55(1):59--126, 1987.
	

\bibitem[Hi2]{hitchin}{\sc Hitchin, N.}: Higgs bundles and characteristic classes, in {
\em Arbeitstagung Bonn 2013; In Memory of Friedrich Hirzebruch}, Progress in Mathematics, Birkh\"auser, 2016, 247--264. \href{https://arxiv.org/abs/1308.4603}{\tt arXiv:1308.4603}

\bibitem[Hu]{huybrechts}{\sc  Huybrechts, D.}:
{ \em Fourier-Mukai transforms in algebraic geometry.}
Oxford Mathematical Monographs. Oxford University Press, Oxford, 2006.

\bibitem[KW]{kapustin-witten}{\sc Kapustin, K. {\rm and} Witten. E.}:
\newblock Electric-magnetic duality and the geometric {L}anglands program. \newblock {\em Commun. Number Theory Phys.}  1  (2007),  no. 1, 1--236. 
\newblock \href{http://arxiv.org/abs/hep-th/0604151}{arXiv:{hep-th/0604151}}, 2006.

\bibitem[Mac]{macdonald} {\sc Macdonald, I.G.}: Symmetric products of an algebraic
curve, {\em Topology} {\bf 1} (1962) 319-343

\bibitem[Qi]{quillen} {\sc Quillen, D.}:
	Determinants of Cauchy-Riemann operators on Riemann surfaces. 
	Funktsional. Anal. i Prilozhen. {\bf 19} (1985), no. 1, 31--34


\bibitem[Sch]{schaposnik}  {\sc Schaposnik, L. P.}: Spectral data for $U(m,m)$-Higgs bundles. {\em Int. Math. Res. Not. IMRN} no. {\bf 11}, 2015, 3486--3498. \href{https://arxiv.org/abs/1307.4419}{\tt arXiv:1307.4419}

\bibitem[Th]{thaddeus} 
{\sc Thaddeus M.}:
Stable pairs, linear systems and the Verlinde formula,
{\em Invent.\ Math.\ }{\bf 117} (1994) 317--353. 

\bibitem[TW]{TW} {\sc Teleman, C. {\rm and} Woodward, C.T.}: The index formula for the moduli of G-bundles on a curve. {\em Annals of mathematics} (2009) 495-527. \href{https://arxiv.org/abs/math/0312154}{arXiv:math/0312154}


\end{thebibliography}
\end{document}